\newcommand{\tetr}[4]{\big\{\begin{smallmatrix}{#1},{#2},{#3}\\{#4} \end{smallmatrix}\big\}} 
\tikzstyle{blank dot}=[fill=black, draw=black, shape=circle]
\tikzstyle{quat}=[fill={rgb,255: red,255; green,128; blue,0}, draw={rgb,255: red,255; green,128; blue,0}, shape=circle]
\tikzstyle{ass.}=[fill=green, draw=none, shape=circle]
\tikzstyle{red line}=[-, draw=red]
\tikzstyle{new edge style 0}=[-, draw=cyan]
\tikzstyle{new edge style 1}=[-, draw=cyan, label=b]
\tikzstyle{Green Box}=[-, fill=green, draw=green]
\tikzstyle{purple line}=[-, draw={rgb,255: red,128; green,0; blue,128}]
\definecolor{cyan(process)}{rgb}{0.0, 0.6, 1.0}
\definecolor{blue-violet}{rgb}{0.54, 0.17, 0.89}
\newcommand{\id}{\textsf{id}}
\newcommand{\1}{\mathds{1}}
\newcommand{\s}{\mathcal}
\newcommand{\bb}{\mathbb}
\renewcommand{\Re}{\mathsf{Re}}
\newcommand{\Hom}{\mathrm{Hom}}
\newcommand{\End}{\mathrm{End}}
\newcommand{\coev}{\mathrm{coev}}
\newcommand{\im}{\mathrm{im}}
\newcommand{\inv}{^{-1}}
\newcommand{\FPdim}{\mathrm{FPdim}}
\renewcommand{\Vec}{\mathrm{Vec}}
\newcommand{\Rep}{\mathrm{Rep}}
\renewcommand{\lim}{\mathop{\mathrm{lim}}}
\newcommand{\TY}{\mathsf{TY}}
\newtheorem*{rep@theorem}{\rep@title}
\newcommand{\newreptheorem}[2]{%
\newenvironment{rep#1}[1]{%
 \def\rep@title{#2 \ref{##1}}%
 \begin{rep@theorem}}%
 {\end{rep@theorem}}}
\newtheorem{theorem}{Theorem}[section]
\newtheorem{proposition}[theorem]{Proposition}
\newtheorem{corollary}[theorem]{Corollary}
\newtheorem{lemma}[theorem]{Lemma}
\theoremstyle{definition}
\newtheorem{definition}[theorem]{Definition}
\newtheorem{example}[theorem]{Example}
\newtheorem{note}[theorem]{Note}
\newtheorem{remark}[theorem]{Remark}
\title{Tambara-Yamagami Categories over the Reals:\\ The Non-Split Case}
\author[J. Plavnik]{Julia Plavnik}
\address{Department of Mathematics, Indiana University}
\address{Fachbereich Mathematik, Universit\"at Hamburg}
\email{jplavnik@iu.edu}
\author[S. Sanford]{Sean Sanford}
\address{Department of Mathematics, The Ohio State University}
\email{sanford.189@osu.edu}
\author[D. Sconce]{Dalton Sconce}
\address{Department of Mathematics, Indiana University}
\email{dsconce@iu.edu}
\begin{document}


\begin{abstract}
Tambara and Yamagami investigated a simple
set of fusion rules with only one non-invertible object, and proved under which circumstances those rules could be given a coherent associator.  They also classified all of the resulting fusion categories up to monoidal equivalence. 
    
We consider a generalization of such fusion rules to the setting where simple objects are no longer required to be
 split simple.  Over the real numbers, this means that simple objects are
either real, complex, or quaternionic. 
In this context, we prove a similar categorification result to the one of Tambara and Yamagami. 
 
\end{abstract}

\maketitle


\thispagestyle{fancy}
\fancyhead[R]{\scriptsize Hamburger Beitr\"age zur Mathematik \\ ZMP-HH/23-5}







\section{Introduction}



In the late '90s, Daisuke Tambara and Shigeru Yamagami were studying the Hopf algebras whose categories of representations had the same fusion rules as $\Rep_{\mathbb C}(D_8)$, the category of complex representations of the dihedral group of order 8.  This investigation led them to analyze and completely classify all those fusion categories that have 
fusion rules which are similar to that of $\Rep_{\mathbb C}(D_8)$ in \cite{TAMBARA1998692}. Such categories are now referred to as Tambara-Yamagami categories in honor of their work.  Their classification allowed for arbitrary base fields but they assumed that all the simple objects were \emph{split}, that is, $\End(X)$ is isomorphic to the base field whenever $X$ is simple.

A decade later, Etingof, Nikshych, and Ostrik developed a homotopy theoretic description of extension theory for fusion categories over algebraically closed fields in \cite{etingofFusionCategoriesHomotopy2009}. This  paper made explicit a deep connection between tensor categories and higher groupoids. As an application of their theory, they gave a new shorter proof of Tambara and Yamagami's classification result but with the assumption that the base field is algebraically closed.

The techniques of Etingof, Nikshych, and Ostrik require adjustments in order to be extended to fusion categories over non\textendash algebraically closed fields.
Tambara-Yamagami categories, being extensions of pointed categories by $\mathbb Z/2\mathbb Z$, are natural first examples to help clarify the theory of extensions in this new setting.  When working over such fields, fusion categories often have simple objects which are non-split.  Tambara and Yamagami's theorem doesn't apply to such cases, however some of these non-split categories have fusion rules that are very similar to Tambara-Yamagami fusion rules.

Motivated by these examples, this article proposes a natural non-split generalization of the Tambara and Yamagami fusion rules.  We analyze these new fusion rules for categories over $\mathbb R$ and completely classify all possible fusion categories with these fusion rules.  Our results provide examples of infinite families of non-split fusion categories over $\mathbb R$ that we believe are brand new.



We say that a fusion ring is a \emph{Tambara-Yamagami} ring if it has a basis formed by a group of invertible elements and a single non-invertible element $m$ with the property that the multiplicity of $m$ in $m^2$ is $0$. Below is a summary of our main results regarding categorifications of these fusion rings over $\mathbb R$.

Let $\mathcal C$ be a Tambara-Yamagami category over $\mathbb R$, that is, a fusion category for which its underlying fusion ring is a Tambara-Yamagami fusion ring. In the non-split setting, $\End(\1)$ is either $\mathbb R$ or $\mathbb C$. If $\End(\1)\cong\mathbb R$ then $\End(m) \cong \mathbb H$ or $\mathbb C$ and if $\End(\1)\cong\mathbb C$ then $\End(m) \cong \mathbb C$. We analyze these three possibilities and the classification results are stated below.  For more on the construction of these categories, as well as an explanation of the notation, see the relevant sections.

\begin{reptheorem}{Thm:TYQuaternionic}
    Let $A$ be a finite group, let $\tau=\sfrac{\pm1}{\sqrt{4|A|}}$, and let $\chi:A\times A\to \mathbb R^\times$ be a nongedegerate symmetric bicharacter on $A$.
    
    A triple of such data gives rise to a non-split Tambara-Yamagami category \newline $\s C_{\bb H}(A,\chi,\tau)$, with $\End(\1)\cong\bb R$ and $\End(m)\cong\bb H$.  Furthermore, all equivalence classes of such categories arise in this way.  Two categories $\s C_{\bb H}(A,\chi,\tau)$ and $\s C_{\bb H}(A',\chi',\tau')$ are equivalent if and only if $\tau=\tau'$ and there exists an isomorphism $f:A\to A'$ such that for all $a,b\in A$,
    \[\chi'\big(f(a),f(b)\big)\;=\;\chi(a,b)\,.\]
\end{reptheorem}

\begin{remark}
    The existence of a nondegenerate bicharacter on $A$ that takes values in $\mathbb R^\times$ implies that $A\cong\big(\mathbb Z/2\mathbb Z\big)^n$ for some $n$.
\end{remark}

\begin{reptheorem}{Thm:TYComplex}
    Let $G\cong A\rtimes\mathbb Z/2\mathbb Z$ be a finite generalized dihedral group.  Let $\tau=\sfrac{\pm1}{\sqrt{2|G|}}$, let $(-)^g\in\text{Gal}(\mathbb C/\mathbb R)$, and let $\chi:G\times G\to \mathbb C^\times_*$ be a symmetric bicocycle on $G$ with respect to $(-)^g$, whose restriction $\chi\mid_{A\times A}$ is a nongedegerate bicharacter.
    
    A quadruple of such data gives rise to a non-split Tambara-Yamagami category $\s C_{\bb C}(G,g,\chi,\tau)$, with $\End(\1)\cong\bb R$ and $\End(m)\cong\bb C$.  Furthermore, all equivalence classes of such categories arise in this way.  Two categories $\s C_{\bb C}(G,g,\chi,\tau)$ and \linebreak $\s C_{\bb C}(G',g',\chi',\tau')$ are equivalent if and only if $g=g'$ and there exists 
    the following data:
    \begin{enumerate}[label = \roman*, align=CenterWithParen, labelwidth=1.5em]
        \item an isomorphism $f:G\to G'$
        \item a map $(-)^h:\bb C\to\bb C$, either the identity or complex conjugation,
        \item a scalar $\lambda\in S^1\subset \mathbb C$,
    \end{enumerate}
    subject to the condition that:
    \begin{gather*}
        \chi'\Big(f(a),f(b)\Big)=\frac{\lambda\cdot\lambda^{ab}}{\lambda^a\cdot\lambda^b}\cdot\chi(a,b)^h\hspace{4mm}\text{and}\hspace{4mm}
        \frac{\tau'}{\tau}=\frac{\lambda}{\lambda^g}\;,\hspace{7mm}\text{for all }\,a,b\in G\,.
    \end{gather*}
\end{reptheorem}

\begin{remark}
    The existence of a nondegenerate bicharacter on $A$ that takes values in $\mathbb C^\times$ implies that $A$ must be abelian.
\end{remark}

To understand the case in which $\End(\1)\cong\mathbb C$ and $\End(m) \cong \mathbb C$, and also understand why in this case it is not necessarily fusion over $\mathbb C$, we introduce the notion of Galois nontrivial objects in Definition \ref{Def:Galois(Non)Triv}. Moreover, having such objects induces a faithful $\mathbb Z/2\mathbb Z$-grading on the fusion categories.
\begin{reptheorem}{Thm:GaloisGrading}
    All fusion categories $\mathcal C$ over $\mathbb R$ that contain Galois nontrivial objects necessarily admit a faithful grading by the group $\text{Gal}(\mathbb C/\mathbb R)\cong\mathbb Z/2\mathbb Z$.
\end{reptheorem}

Tambara-Yamagami categories are naturally $\mathbb Z/2\mathbb Z$-graded.  Our analysis shows that when both gradings are present they, must be the same.  There are many such categories, as the following theorem shows.

\begin{reptheorem}{Thm:TYFullComplex}
    Let $A$ be a finite group, and let $\chi:A\times A\to \mathbb C^\times$ be a nondegenerate skew-symmetric bicharacter.  Such a pair $(A,\chi)$ gives rise to a non-split Tambara-Yamagami category $\s C_{\overline{\bb C}}(A,\chi)$, with $\End(X)\cong\bb C$ for every simple object $X$.  Furthermore, all equivalence classes of such categories arise in this way.  Two categories $\s C_{\overline{\bb C}}(A,\chi)$ and $\s C_{\overline{\bb C}}(A',\chi')$ are equivalent if and only there exist isomorphisms:
    \begin{enumerate}[label = \roman*, align=CenterWithParen, labelwidth=1.5em]
        \item an isomorphism $f:A\to A'$, and
        \item $(-)^h:\bb C\to\bb C$ (either the identity or complex conjugation),
    \end{enumerate}
    such that $\chi'\big(f(a),f(b)\big)=\chi(a,b)^h$ for all $a,b\in A$.
\end{reptheorem}  

\vspace{2mm}

This paper is organized as follows.  In Section \ref{sec:prelim}, we provide key definitions we will use, and offer some important examples for context. In Section \ref{Sec:FusionCats/R}, we discuss the properties of fusion categories over $\mathbb R$.  We propose a generalization of Tambara-Yamagami categories in the non-split case in Section \ref{Sub:Non-SplitTYCats}. In Section \ref{Sec:Real/Quaternionic}, we analyze the case with real unit and quaternionic non-invertible simple while in Section \ref{Sec:Real/Complex}, we study the case with real unit and complex non-invertible simple. Finally, in Section \ref{Sec:ComplexGalois}, we discuss the case with all simple objects being complex and having a Galois nontrivial object.
  
\subsection*{Acknowledgments} This work
began with the Research Experiences for Undergraduates program at Indiana University
supported by the NSF grant DMS-1757857. It then evolved into a chapter of Sean Sanford’s Ph.D. thesis from 2022.  The research of J.P. was partially supported by NSF grants DMS-1917319 and DMS-2146392 and by Simons Foundation Award 889000 as part of the Simons Collaboration on Global Categorical Symmetries. J.P. 
performed part of this at the Aspen Center for Physics, which is supported by National Science Foundation grant PHY-1607611. 
J.P. would like to thank the hospitality
and excellent working conditions at the Department of Mathematics at the University of Hamburg, where she has carried out part of this research as a Fellow of the Humboldt Foundation.

\section{Preliminaries}\label{sec:prelim}

We refer the reader to~\cite{etingof2015tensor}
for the basic theory of fusion categories, fusion rings, and for the terminology used throughout this article. We fix a field $\mathbb K$. In most of this article, we will focus on the case in which $\mathbb K = \mathbb R$ but for this section, we do not have any restrictions on the field $\mathbb K$.

\begin{definition}
A fusion category $\mathcal C$ over $\mathbb K$ is a $\mathbb K$-linear finite semisimple rigid monoidal category with simple (monoidal) unit.
\end{definition} 
We will denote by $(\mathcal C, \otimes, \alpha, \1, \ell, r)$ the monoidal structure of the fusion category $\mathcal C$. Here  $\alpha$ is the associativity constraint and $\ell$ and $r$ are the left and right unit constraints for the monoidal unit $\1$. 

\begin{remark}
The Grothendieck group $K_0(\mathcal C)$ associated with the underlying finite semisimple abelian category $\mathcal C$ is the abelian free group with a basis given by the isomorphism classes of simple objects, see \cite[Definition 1.5.8]{etingof2015tensor}. Since $\mathcal C$ is a monoidal category, the Grothendieck group $K_0(\mathcal C)$ inherits a ring structure. Moreover, since the category is rigid, this ring is a \emph{fusion ring} (the definition of a fusion ring can be found in \cite[Definition 3.1.7]{etingof2015tensor} for the split case, and in \cite[Definition 3.1.5]{sanfordThesis} for the non-split case). The interested reader can find more details in \cite[Section 4.5]{etingof2015tensor}.
\end{remark} 

A fusion category $\mathcal C$ can be completely described in terms of its Grothendieck ring and  associativity (and unit) constraints satisfying the pentagon (and triangle) axiom.
A \emph{categorification} of a fusion ring is a fusion category with such fusion ring as its Grothendieck ring. One natural question is whether a given fusion ring admits a categorification. One can further inquire about all the possible categorifications up to tensor equivalence.

\begin{example}
Given a finite group $G$, the group ring $\mathbb Z G$ is a fusion ring. This fusion ring is always categorifiable. A fusion category with $\mathbb Z G$ as its Grothendieck ring is called pointed, see \cite[Defintion 5.11.1]{etingof2015tensor}. 

When the field $\mathbb K$ is algebraically closed, any pointed fusion category is tensor equivalent to a category $\mathbb K\text{-}\Vec_G^\omega$ of finite dimensional $\mathbb K$-vector spaces graded by the group $G$ with the associativity constraint twisted by a 3-cocycle $\omega\in Z^3(G,\mathbb K^\times)$ for some $G$ and $\omega$.  This result is a corollary of a theory long known but unpublished, in Sinh's Ph.D. thesis \cite{sinhGrCats}.  See \cite{baezHigherDimAlgV} for a modern discussion.
If $\mathbb K$ is non-algebraically closed, a similar result holds by \cite{zhuPrimes}. 
\end{example}


 The fusion categories $\Rep_{\mathbb C}(G)$ of finite dimensional complex representations of a finite group $G$ are pointed only when $G$ is abelian.  On the other hand, the fusion rings of $\Rep_{\mathbb C}(D_8)$ are \emph{almost} pointed; they have a unique non-invertible simple object. 
 With the expectation that the fusion rings associated with these categories would be the next simplest after pointed fusion categories, Tambara and Yamagami investigated and fully classified all categorifications of such fusion rules \cite{TAMBARA1998692}.  In the next subsection, we will describe this classification, and the remainder of the article will be devoted to generalizing their result.

\subsection{Tambara-Yamagami Fusion Categories: Split Case}\label{Sub:SplitTYCats}


Let $A$ be a finite group. The \emph{(split) Tambara-Yamagami fusion ring} $\TY(A)$ has a $\mathbb Z$-basis $A\sqcup\{m\}$, $m\notin A$. The product is defined as follows
\[a\cdot b=ab\;\;,\;\;a\cdot m=m=m\cdot a\;\;,\;\;m\cdot m=\sum_{c\in A}c,\]
for $a,b\in A$. The involution of the fusion ring is given by $m^* = m$ and $a^*= a^{-1}$, for $a\in A$. 
A fusion category $\mathcal C$ over $\mathbb K$ is said to be a \emph{split Tambara-Yamagami fusion category} if $K_0(\mathcal C)=\TY(A)$, for some finite group $A$, and $\End(X) \cong \mathbb K$, for every simple object $X$ of $\mathcal C$.

Tambara and Yamagami considered such a fusion ring and determined whether or not $\TY(A)\cong K_0(\mathcal C)$ for some fusion category $\mathcal C$ by solving the pentagon equations  \cite{TAMBARA1998692}. Given a fusion ring $\TY(A)$, they constructed fusion categories $\mathcal C(A,\chi,\tau)$, where $\chi:A\times A\to\mathbb K^\times$ and $\tau\in\mathbb K^\times$ are used to determine the associator isormorphisms and must satisfy certain conditions.  This landmark paper stands apart in the field of fusion categories as one of the few times such a categorification has ever been done explicitly by hand. Tambara and Yamagami work over arbitrary fields $\mathbb K$ but assume that all of their simple objects $X$ are split, i.e. $\End(X)\cong\mathbb K$. Their main result is the following.

\begin{theorem}\cite[Theorem 3.2]{TAMBARA1998692}\label{OGTYTheorem}
    Given a triple $(A,\chi,\tau)$, where $A$ is a finite group, $\chi:A\times A\to\mathbb K^\times$ is a nondegenerate symmetric bicharacter, and $\tau\in\mathbb K$ satisfies $\tau^2=\sfrac{1}{|A|}$, there exists a split Tambara-Yamagami category $\mathcal C(A,\chi,\tau)$ with fusion ring $\TY(A)$ and associators given below.
    
    \begin{align*}
    	    \alpha_{a,b,c}&=\id_{abc},\\
    	    \alpha_{a,b,m}=\alpha_{m,b,c}&=\id_{m},\\
    	    \alpha_{a,m,c}&=\chi(a,c)\cdot\id_{m},\\
    	    \alpha_{a,m,m}=\alpha_{m,m,c}&=\id_{m\otimes m},\\
    	    \alpha_{m,b,m}&=\bigoplus_{a\in A}\chi(a,b)\cdot\id_{a},\\
    		  \alpha_{m,m,m}&=\left(\frac{\tau}{\chi(a,b)}\cdot\id_m\right)_{a,b}:\bigoplus_{a\in A}m\longrightarrow\bigoplus_{b\in A}m\,.
    \end{align*}
    
    Furthermore, any Tambara-Yamagami category is monoidally equivalent to \newline $\mathcal C(A,\chi,\tau)$ for such a triple.  Two categories $\mathcal C(A,\chi,\tau)$ and $\mathcal C(A',\chi',\tau')$ are monoidally equivalent if and only if $\tau=\tau'$ and there exists an isomorphism $f:A\to A'$ such that for any $a,b\in A$, $\chi'\big(f(a),f(b)\big)=\chi(a,b)$.
\end{theorem}

\begin{remark}
 Notice that the nondegeneracy of the bicharacter implies that $A$ must be abelian. So for the fusion ring $\TY(A)$ to be categorifiable, $A$ must be an abelian group.
\end{remark}

The assumption that all simple objects must be split is automatic if $\mathbb K$ is an algebraically closed field. In order to understand the full picture in the non-algebraically closed setting, we generalize Tambara-Yamagami fusion categories to include the possibility of non-split simple objects. A priori it is not obvious that such categories should exist, so let us consider two fusion categories over $\mathbb R$ whose fusion rules are similar to Tambara-Yamagami fusion rules.

\begin{example}\label{TYEg1}
    Consider the category $\Rep_{\bb R}(\bb Z/4\bb Z)$ of finite dimensional real representations of $\bb Z/4\bb Z$.  This category has two invertible objects $\1$ and $X$, and there is an additional irreducible representation $V$.  If we let $t$ be a generator for $\mathbb Z/4\mathbb Z$, then each of these representations can be described by writing down the coordinate matrix by which $t$ acts in a chosen basis.  In this way, the irreducible representations $\1, X$, and $V$ are described as follows:
    \begin{align*}
        \1&=\big(\mathbb R^1\;,\;t\mapsto[1]\big)\,,\\
        X&=\big(\mathbb R^1\;,\;t\mapsto[-1]\big)\,,\;\text{ and}\\
        V&=\left(\mathbb R^2\;,\;t\mapsto\big[\,{\begin{smallmatrix}0&-1\\1&0\end{smallmatrix}}\,\big]\right)\,.
    \end{align*}
    From these definitions, it can be shown that 
    \[X\otimes V\cong V\otimes X\cong V\;\text{ and }\;V\otimes V\cong2\cdot(\1\oplus X)\,.\]
    These fusion rules show that $\Rep_{\bb R}(\bb Z/4\bb Z)$ looks similar to a Tambara-Yamagami category but with some key differences. The first difference is that $\End(V)\cong\bb C$, and the second is that $V\otimes V$ has two copies of each invertible.
\end{example}

\begin{example}\label{TYEg2}
    Let $\mathbb H=\mathbb R\langle i, j, k\rangle/(i^2=j^2=k^2=ijk=-1)$ be the quaternion algebra, and let $Q_8=\{\pm1, \pm i, \pm j, \pm k\}\subset\mathbb H^\times$ be the quaternion group of order 8.  Consider the category $\Rep_{\bb R}(Q_8)$ of finite dimensional real representations of $Q_8$. This category has four invertible objects $\1$, $I$, $J$ and $K$.  The representations $I$, $J$, and $K$ are one dimensional, and are determined by the requirement that $i$, $j$, and $k$ respectively act trivially.  There is an additional irreducible representation $H=\mathbb H$ given by the quaternion algebra itself, where $Q_8$ acts by left multiplication.  From these definitions, it can be shown that the fusion rules are
    $X\otimes H\cong H\otimes X\cong H,$
    for any invertible object $X$, and 
    \[H\otimes H\cong4\cdot\big(\1\oplus I\oplus J\oplus K\big)\,.\]
    In this example, we encounter two aspects that make it slightly different from a Tambara-Yamagami category.  Firstly,  $\End(H)\cong\bb H$. Secondly, $H\otimes H$ has four copies of every invertible object.
\end{example}


The above examples have a striking similarity to Tambara-Yamagami categories, and they appear to differ from a Tambara-Yamagami fusion ring $\TY(A)$ in a predictable way. In the next sections, we pursue the study of fusion rings and fusion categories similar to the ones described in the examples above.

The results of our article achieve this classification over $\mathbb R$ and demonstrate that these non-split versions of Tambara-Yamagami categories are indeed very common.

\section{Fusion Categories over the Reals}\label{Sec:FusionCats/R}

The ideas of this section were originally developed in the second author's thesis \cite{sanfordThesis} in the more general setting of non\textendash algebraically closed fields $\mathbb K$.  Here we specialize to the case $\mathbb K=\mathbb R$ and present facts such as Proposition \ref{Thm:GaloisGrading} which are unique to the real numbers.

\subsection{Schur's Lemma}
For  $\mathbb K$-linear abelian categories, we say that a nonzero object is simple if it has no nontrivial subobjects.  The following lemma is a modern adaptation of a representation-theoretic result of Schur.
\begin{lemma}[Schur's Lemma]\label{SchursLemma}
Let $X$ and $Y$ be simple objects in a $\mathbb K$-linear abelian category.  If $X\ncong Y$, then $\Hom(X,Y)=0$ and $\End(X)$ is a division algebra.
\end{lemma}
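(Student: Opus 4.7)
The plan is to run the classical Schur's lemma argument, with the only adaptation being that elements of modules are replaced by morphisms in the abelian category and subspaces are replaced by subobjects. The structural feature driving everything is the kernel-image factorization available in any abelian category, together with the defining property of a simple object: its only subobjects up to isomorphism are $0$ and itself.

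First I would fix an arbitrary morphism $f:X\to Y$ and consider $\ker(f)\hookrightarrow X$. Since $X$ is simple, either $\ker(f)\cong X$, in which case $f=0$, or $\ker(f)=0$, in which case $f$ is a monomorphism. In the latter case, $\im(f)$ is a nonzero subobject of $Y$, so simplicity of $Y$ forces $\im(f)=Y$, making $f$ an epimorphism as well. A morphism in an abelian category that is both mono and epi is an isomorphism, which contradicts the hypothesis $X\ncong Y$. Hence every $f\in\Hom(X,Y)$ must vanish, proving the first claim.

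For the second claim, I would apply the same reasoning with $Y=X$. Every nonzero $f\in\End(X)$ is then simultaneously mono and epi, hence an isomorphism, and thus two-sided invertible in $\End(X)$. Because composition is $\mathbb K$-bilinear, $\End(X)$ is already a $\mathbb K$-algebra, and every nonzero element being invertible shows that it is a division algebra over $\mathbb K$.

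I do not expect any real obstacle here; the argument is entirely formal. The one point I would be careful about is to phrase things in terms of universal properties of kernels, images, and cokernels rather than element chases, since a general $\mathbb K$-linear abelian category need not be concrete. It is worth noting what the lemma does \emph{not} say: it gives no control over \emph{which} division algebra $\End(X)$ is. Pinning that down to $\mathbb R$, $\mathbb C$, or $\mathbb H$ in the sequel will require the additional finiteness hypotheses built into the definition of a fusion category, together with the Frobenius theorem on finite-dimensional real division algebras.
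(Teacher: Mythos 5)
Your proof is correct: the kernel/image argument, the fact that mono plus epi implies iso in an abelian category, and the $\mathbb K$-bilinearity of composition are exactly what is needed, and your closing remark rightly separates the lemma from the later use of Frobenius' theorem. The paper states this lemma without proof, treating it as classical, and your argument is the standard one that would be supplied.
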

Over algebraically closed fields, the only finite dimensional division algebra is the field itself, and fusion categories over algebraically closed fields have been extensively studied, see for example \cite{etingofFusionCategories2005}.  The possibility of having $\End(X)$ be a nontrivial division algebra is the primary source of new phenomena for fusion categories over non\textendash algebraically closed fields.
\begin{definition}\label{Def:Non-Split}
    A simple object $X$ in a $\mathbb K$-linear abelian category is said to be \emph{split}, or split simple, if $\End(X)\cong\mathbb K$. 
    Otherwise, the simple object is called \emph{non-split}.  A category is said to be \emph{split} if all of its simple objects are split.
\end{definition}

Finite groups give rise to families of examples of fusion categories. We will denote by $\Rep_{\mathbb K} (G)$ the category of finite-dimensional representations over $\mathbb K$ of $G$.
The simple objects in this category are the irreducible representations of $G$. Example \ref{TYEg1} describes $\Rep_{\mathbb R}(\mathbb Z/4\mathbb Z)$, which has a non-split irreducible representation $V$ that 
has $\End(V)\cong\mathbb C$. In Example \ref{TYEg2} the category $\Rep_{\mathbb R}(Q_8)$ is considered, and it has a non-split simple object $H$ that 
has $\End(H)\cong\mathbb H$.

These examples are generic in a certain sense.  When working over the real numbers, there are only two ways for a simple object $X$ to be non-split, that is, $\End(X)\cong\mathbb C$ or $\End(X)\cong\mathbb H$.
This is a consequence of the following well-known result of Frobenius.

\begin{theorem}[\cite{Frobenius1877}]\label{Thm:FrobeniusDivAlg/R}
    Any finite dimensional division algebra over the real numbers must be isomorphic to $\mathbb R$, $\mathbb C$, or $\mathbb H$.
\end{theorem}


\begin{definition}
    A simple object is said to be real, complex, or quaternionic if \linebreak$\End(X)\cong\mathbb R$, $\mathbb C$, or $\mathbb H$ respectively.
\end{definition}

Despite the fact that all possible division algebras can occur, there are restrictions on which simples can have which endomorphism algebras when the category is monoidal.
Invertible objects, objects $a$ for which $a^*\otimes a\cong\1$, are in particular constrained as the following proposition shows.

\begin{proposition}\label{Prop:AllSameDivAlg}
    All invertible objects have isomorphic endomorphism algebras in a fusion category over an arbitrary field $\mathbb K$.  Moreover, this common algebra is a finite-dimensional field extension of $\mathbb K$.
\end{proposition}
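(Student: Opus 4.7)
My plan is to first pin down the structure of $\End(\1)$, and then transfer that structure to $\End(X)$ for an arbitrary invertible $X$ using the fact that $X \otimes (-)$ is an autoequivalence of $\mathcal C$.

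Paragraph 1 (the unit object). Since $\1$ is simple in the fusion category $\mathcal C$, Schur's Lemma \ref{SchursLemma} tells me that $\End(\1)$ is a division algebra over $\mathbb K$. It is finite-dimensional because $\mathcal C$ is a finite $\mathbb K$-linear category. The key extra input I need is commutativity, which follows from an Eckmann--Hilton argument: the set $\End(\1)$ carries two unital binary operations, namely composition and the operation induced by tensor product (after identifying $\End(\1 \otimes \1) \cong \End(\1)$ via the unit constraint $\ell_\1 = r_\1$). Both have $\id_\1$ as unit, and the interchange law $(f \circ g) \otimes (h \circ k) = (f \otimes h) \circ (g \otimes k)$ is built into the bifunctoriality of $\otimes$. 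The standard Eckmann--Hilton conclusion then forces the two operations to coincide and to be commutative. Thus $\End(\1)$ is a finite-dimensional commutative division algebra over $\mathbb K$, i.e.\ a finite field extension.

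Paragraph 2 (transferring the structure to $\End(X)$). For each invertible object $X$, I would define a map
\[
\phi_X : \End(\1) \longrightarrow \End(X), \qquad f \longmapsto r_X \circ (\id_X \otimes f) \circ r_X^{-1}.
\]
Bifunctoriality of $\otimes$ gives $(\id_X \otimes f) \circ (\id_X \otimes g) = \id_X \otimes (fg)$, so $\phi_X$ is a ring homomorphism; the unit $\id_\1$ goes to $\id_X$. Because $X$ is invertible, the endofunctor $X \otimes (-) : \mathcal C \to \mathcal C$ is an equivalence (with quasi-inverse $X^* \otimes (-)$), and a fortiori fully faithful. In particular it induces a bijection $\End(\1) \to \End(X \otimes \1)$, which combined with conjugation by $r_X$ is exactly $\phi_X$. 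So $\phi_X$ is a ring isomorphism. Composing $\phi_X^{-1}$ with $\phi_Y$ for a second invertible $Y$ gives an isomorphism $\End(X) \cong \End(Y)$, and since $\End(\1)$ is a finite field extension of $\mathbb K$, so is every $\End(X)$.

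The only nontrivial ingredient is commutativity of $\End(\1)$, and the one subtlety there is the careful use of the unit constraints to identify $\1 \otimes \1$ with $\1$ so that composition and tensor product are literally two binary operations on the same set; once that identification is made via $\ell_\1 = r_\1$, the Eckmann--Hilton argument is automatic. Everything else reduces to bifunctoriality of $\otimes$ and the observation that the tensor action of an invertible object is an equivalence.
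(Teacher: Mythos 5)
Your proof is correct and follows essentially the same route as the paper: tensoring with the invertible object is an equivalence, giving $\End(\1)\cong\End(X)$ via the unit constraint, and the Eckmann--Hilton argument makes $\End(\1)$ a commutative (hence, being a finite-dimensional division algebra by Schur's Lemma, a field) extension of $\mathbb K$. The only differences are cosmetic — you tensor on the left where the paper tensors on the right, and you spell out the interchange law and the explicit isomorphism $\phi_X$ in more detail.
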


\begin{proof}
    If $g$ is an invertible object, then $(-)\otimes g$ is an equivalence, and so
    \[\End(\1)\cong\End(\1\otimes g)\cong\End(g)\,.\]
    Thus all endomorphism algebras of invertible objects are isomorphic to one another.
    
    The Eckmann-Hilton argument forces $\End(\1)$ to be commutative. Since $\End(\1)$ is a finite-dimensional commutative division algebra that contains $\mathbb K$, the claim follows.
\end{proof}

Proposition \ref{Prop:AllSameDivAlg} has an interesting dichotomy as a corollary.  To elaborate, we will need some terminology.  For any object $X$, we can use the isomorphism $\ell_X:\1\otimes X\to X$ to turn endomorphisms of $\1$ into endomorphisms of $X$.  When the base field admits nontrivial division algebras this embedding of $\End(\1)$ into $\End(X)$ may not be obvious.

\begin{definition}\label{def: lambda and rho}
    Let $e\in\End(\1)$.  The endomorphisms $\lambda_X(e),\rho_X(e):X\to X$ are defined as the compositions below
    \[\begin{tikzcd}[ampersand replacement=\&]
    	\& X \& X \\
    	{\1\otimes X} \&\&\& X\otimes\1 \\
    	{\1\otimes X} \&\&\& X\otimes\1 \& {.} \\
    	\& X \& X
    	\arrow["{\lambda_X(e)}"', from=1-2, to=4-2]
    	\arrow["{\rho_X(e)}", from=1-3, to=4-3]
    	\arrow["{\ell_X^{-1}}"', from=1-2, to=2-1]
    	\arrow["{e\otimes\id_X}"', from=2-1, to=3-1]
    	\arrow["{\ell_X}"', from=3-1, to=4-2]
    	\arrow["{r_X^{-1}}", from=1-3, to=2-4]
    	\arrow["{\id_X\otimes e}", from=2-4, to=3-4]
    	\arrow["{r_X}", from=3-4, to=4-3]
    \end{tikzcd}\]
    These define algebra embeddings
    \[\lambda_X,\rho_X:\End(\1)\hookrightarrow\End(X),\]
    that are called the left and right embeddings for $X$.  The naturality of the unitors $\ell$ and $r$ imply that the embeddings $\lambda_X$ and $\rho_X$ factor through the inclusion of the center, as in the diagram below
    \[\begin{tikzcd}[ampersand replacement=\&]
    	\&\& {\End(X)} \\
    	{\End(\1)} \&\& {Z(\End(X))\;,}
    	\arrow[hook, from=2-3, to=1-3]
    	\arrow["{\lambda_X^0}"', hook, from=2-1, to=2-3]
    	\arrow["{\lambda_X}", hook, from=2-1, to=1-3]
    \end{tikzcd}\]
    and similarly for $\rho_X$.
\end{definition}

\begin{corollary}\label{Cor:AllRealOrAllComplex}
    In a fusion category over $\mathbb R$, either all invertible objects are real, or all invertible objects are complex.  Moreover, if the invertible objects are complex then all simple objects are complex.
\end{corollary}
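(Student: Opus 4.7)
The plan is to combine Proposition \ref{Prop:AllSameDivAlg} with Frobenius's classification (Theorem \ref{Thm:FrobeniusDivAlg/R}), and then exploit the central embedding from Definition \ref{def: lambda and rho} to propagate information from $\End(\1)$ to $\End(X)$ for an arbitrary simple $X$.

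For the first assertion, I would note that Proposition \ref{Prop:AllSameDivAlg} tells us that every invertible object has endomorphism algebra isomorphic to $\End(\1)$, and moreover $\End(\1)$ is a finite-dimensional field extension of $\mathbb R$. By Frobenius, the finite-dimensional division algebras over $\mathbb R$ are $\mathbb R$, $\mathbb C$, and $\mathbb H$; only the first two are commutative, so $\End(\1)$ must be $\mathbb R$ or $\mathbb C$. This yields the dichotomy for invertibles.

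For the second assertion, suppose $\End(\1)\cong\mathbb C$, and let $X$ be any simple object. The left embedding $\lambda_X^0:\End(\1)\hookrightarrow Z(\End(X))$ of Definition \ref{def: lambda and rho} exhibits $Z(\End(X))$ as containing a copy of $\mathbb C$. Applying Schur's Lemma and Frobenius again, $\End(X)$ is one of $\mathbb R$, $\mathbb C$, $\mathbb H$. The case $\End(X)\cong\mathbb R$ is excluded because $\mathbb R$ is too small to contain $\mathbb C$, and the case $\End(X)\cong\mathbb H$ is excluded because $Z(\mathbb H)=\mathbb R$ does not contain $\mathbb C$. The only remaining option is $\End(X)\cong\mathbb C$, i.e.\ $X$ is complex.

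There is essentially no serious obstacle: the argument is a direct combination of facts already established in this section. The only mildly delicate point is to remember that for the second claim one needs the embedding into the \emph{center} of $\End(X)$ (not just into $\End(X)$ itself), in order to rule out the quaternionic case — this is exactly the refinement recorded at the end of Definition \ref{def: lambda and rho}.
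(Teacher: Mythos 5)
Your argument is correct and is essentially the paper's own proof: both combine Proposition \ref{Prop:AllSameDivAlg} with Theorem \ref{Thm:FrobeniusDivAlg/R} for the dichotomy, and both use the central embedding $\lambda_X^0:\End(\1)\hookrightarrow Z(\End(X))$ to rule out real and quaternionic simples when $\1$ is complex. The only difference is cosmetic — you argue contrapositively where the paper argues directly.
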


\begin{proof}
    By combining the Frobenius Theorem (Theorem \ref{Thm:FrobeniusDivAlg/R}) with Proposition \ref{Prop:AllSameDivAlg}, it follows that $\1$ is either real or complex, and all invertibles must be of matching type.
    If a simple object $X$ is real or quaternionic then $Z\big(\End(X)\big) = \mathbb R$.  Since $\lambda_X^0:\End(\1)\hookrightarrow\mathbb R$ is an algebra embedding, we see that real and quaternionic objects can only exist when $\1$ is real.  Thus if $\1$ is complex, every simple object must also be complex.
\end{proof}

Corollary \ref{Cor:AllRealOrAllComplex} raises the following question. If a fusion category over $\mathbb R$ has the property that all of its simple objects are complex, would that mean that the category is also fusion over $\mathbb C$?  The following example shows that the answer is no.

\begin{example}\label{Eg:CCBim}
    Consider the complex numbers $\mathbb C$ as an algebra over the real numbers.  Let $\mathcal C=(\mathbb C,\mathbb C)$-bim be the category of finite dimensional bimodules for this algebra.  This category is equivalent to the category of modules for the algebra $\mathbb C\otimes_{\mathbb R}\mathbb C$.
    This category is monoidal, with the tensor product being the relative tensor product $\otimes_{\mathbb C}$, and monoidal unit $\1_{\mathcal C}=\mathbb C$.  This category has another simple bimodule $\overline{\mathbb C}$, where the left and right actions of $\mathbb C$ differ by complex conjugation.  It can easily be shown that $\End(\overline{\mathbb C})\cong\mathbb C$.

    Thus all simple objects of $\mathcal C$ are complex.  
    However, this category is not fusion over $\mathbb C$ because the tensor product fails to be $\mathbb C$-bilinear.  To see this consider a complex number $c$ and observe that $\id_{\overline{\mathbb C}}\otimes c\cdot\id_{\overline{\mathbb C}}\;=\;\overline{c}\cdot\id_{\overline{\mathbb C}}\otimes\id_{\overline{\mathbb C}}$ as morphisms in $\End(\overline{\mathbb C}\otimes\overline{\mathbb C})$.
\end{example}

\subsection{Galois Nontrivial Objects}\label{sSec:GaloisNontrivial}

The content of this subsection will not be used until Section \ref{Sec:ComplexGalois}.  Any reader primarily interested in the case where $\End(\1)\cong\mathbb R$ may safely skip ahead to Section \ref{Sub:Non-SplitTYCats}.

The object $\overline{\mathbb C}$ in Example \ref{Eg:CCBim} is what is known as a Galois nontrivial object.  That is, the conjugating complex bimodule $\overline{\mathbb C}$ is an object for which $\lambda_{\overline{\mathbb C}}\neq\rho_{\overline{\mathbb C}}$.  This is a phenomenon that cannot occur when working over an algebraically closed field, so we give it a name.

\begin{definition}\label{Def:Galois(Non)Triv}
    An object $X$ in a fusion category  is called \emph{Galois trivial} if $\lambda_X=\rho_X$. Otherwise, we say that $X$ is \emph{Galois nontrivial}.
\end{definition}

In general, it is possible for $\im(\lambda_X)$ and $\im(\rho_X)$ to be distinct subalgebras of $\End(X)$, but over $\mathbb R$ such issues do not occur.

\begin{proposition}\label{Prop:GaloisNontrivialsOverR}
    Let $\mathcal C$ be a fusion category over $\mathbb R$. If $\mathcal C$ has Galois nontrivial simple objects then all simple objects are necessarily complex and the left and right embeddings of Galois nontrivial simple objects differ by complex conjugation.
\end{proposition}

\begin{proof}
    Observe that the linearity assumptions on fusion categories imply that all objects are automatically Galois trivial whenever the unit $\1$ is split.  When working over $\mathbb R$ this means that $\1$ must be complex for Galois nontrivial objects to exist.  By Corollary \ref{Cor:AllRealOrAllComplex}, all simple objects must be complex.

    For a given simple object $X$, $\lambda_X$ and $\rho_X$ are both algebra automorphisms of $\mathbb C$ that happen to fix $\mathbb R$.  Knowing that the embeddings are isomorphisms makes Galois nontriviality of $X$ equivalent to the statement that $\lambda_X^{-1}\circ\rho_X\neq\id_{\mathbb C}$, therefore this automorphism must be complex conjugation.
\end{proof}

Given a fusion category $\mathcal C$, we define $\mathcal C_0$ as the full subcategory generated under direct sums by the Galois trivial simple objects, and $\mathcal C_1$ as the full subcategory generated by the Galois nontrivial objects.  In this way, we obtain a $\mathbb Z/2\mathbb Z$-grading $\mathcal C\simeq\mathcal C_0\oplus\mathcal C_1$ as $\mathbb R$-linear abelian categories.  We will demonstrate that this grading respects the monoidal structure as well.

\begin{lemma}\label{Lem:TrivalentGaloisCompatibility}
    Let $\mathcal C$ be a monoidal $\mathbb K$-linear abelian category.  Let $X, Y,$ and $Z$ be simple objects in $\mathcal C$ and let $f:Z\to X\otimes Y$ be a morphism.  If $\lambda_X$, $\lambda_Y$, and $\lambda_Z$ are invertible, and $f\neq0$, then
    \[\lambda_X^{-1}\rho_X\lambda_Y^{-1}\rho_Y\;=\;\lambda_Z^{-1}\rho_Z.\]
\end{lemma}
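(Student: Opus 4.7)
The plan is to leverage the naturality of the unitors together with the triangle axiom to convert the tensor-factored actions of $\End(\1)$ on $X\otimes Y$ into an equation in $\End(Z)$, pulled back along $f$. The indispensable observation is that $f$ is necessarily a monomorphism: because $Z$ is simple and $f\neq 0$, the kernel $\ker(f)$ is a proper subobject of $Z$ and hence vanishes.

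First, I would record three identities valid in any monoidal $\mathbb K$-linear category. For every $e\in\End(\1)$, the two coherence identities
\[\lambda_{X\otimes Y}(e)=\lambda_X(e)\otimes\id_Y,\qquad \rho_{X\otimes Y}(e)=\id_X\otimes\rho_Y(e)\]
follow from the standard compatibilities $\ell_{X\otimes Y}=(\ell_X\otimes\id_Y)\circ\alpha^{-1}_{\1,X,Y}$ and $r_{X\otimes Y}=(\id_X\otimes r_Y)\circ\alpha_{X,Y,\1}$ together with naturality of $\alpha$. The triangle axiom yields a third, middle-insertion identity
\[\rho_X(e)\otimes\id_Y=\id_X\otimes\lambda_Y(e),\]
obtained by conjugating the endomorphism $\id_X\otimes e\otimes\id_Y$ of $(X\otimes\1)\otimes Y$ through the relation $(\id_X\otimes\ell_Y)\circ\alpha_{X,\1,Y}=r_X\otimes\id_Y$ and using naturality of $\alpha$.

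The core calculation then evaluates this common middle-insertion endomorphism against $f$ in two different ways. Setting $\tilde e:=\lambda_X^{-1}\rho_X(e)$ so that $\lambda_X(\tilde e)=\rho_X(e)$, the first coherence identity combined with the naturality of $\lambda$ on $X\otimes Y$ produces $(\rho_X(e)\otimes\id_Y)\circ f = \lambda_{X\otimes Y}(\tilde e)\circ f = f\circ\lambda_Z(\tilde e)$. Setting $\hat e:=\rho_Y^{-1}\lambda_Y(e)$, the second identity analogously yields $(\id_X\otimes\lambda_Y(e))\circ f = f\circ\rho_Z(\hat e)$. Equating the two expressions (they describe the same endomorphism of $X\otimes Y$) and cancelling $f$ on the right (using that it is monic) gives $\lambda_Z(\lambda_X^{-1}\rho_X(e)) = \rho_Z(\rho_Y^{-1}\lambda_Y(e))$ in $\End(Z)$. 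Applying $\lambda_Z^{-1}$ and using $(\rho_Y^{-1}\lambda_Y)^{-1}=\lambda_Y^{-1}\rho_Y$ rearranges this into the asserted equality $\lambda_X^{-1}\rho_X\lambda_Y^{-1}\rho_Y=\lambda_Z^{-1}\rho_Z$.

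The main obstacle is the careful derivation of the middle-insertion identity from the triangle axiom; everything else is formal manipulation once that fact and the monomorphism property of $f$ are in hand. A secondary point worth flagging is that writing $\lambda_X^{-1}\rho_X$ implicitly requires the two embeddings of Definition \ref{def: lambda and rho} to be bijections onto a common subalgebra of $\End(X)$, which holds in the simple-object settings of interest by Schur's Lemma and Proposition \ref{Thm:GaloisNontrivialsOverR}.
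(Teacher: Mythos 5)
Your proof is correct and fills in exactly the argument the paper leaves implicit: its one-line proof (``naturality and the triangle axiom'') is precisely your combination of the unit-coherence identities $\lambda_{X\otimes Y}(e)=\lambda_X(e)\otimes\id_Y$, $\rho_{X\otimes Y}(e)=\id_X\otimes\rho_Y(e)$, the middle-insertion identity $\rho_X(e)\otimes\id_Y=\id_X\otimes\lambda_Y(e)$ from the triangle axiom, naturality of the unitors applied along $f$, and monicity of $f$ (forced by simplicity of $Z$). Your closing caveat that $\lambda_X^{-1}\rho_X$ only makes sense when the embeddings share an image is also consistent with the paper, which addresses this via Proposition \ref{Thm:GaloisNontrivialsOverR} in the settings where the lemma is used.
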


\begin{proof}
This follows from naturality and the triangle axiom.
\end{proof}

\begin{theorem}\label{Thm:GaloisGrading}
    All fusion categories over $\mathbb R$ that contain Galois nontrivial objects necessarily admit a faithful grading by the group $\text{Gal}(\mathbb C/\mathbb R)\cong\mathbb Z/2\mathbb Z$.
\end{theorem}

\begin{proof}
    Let $X$ be an object in $\mathcal C_i$ and $Y$ be an object in $\mathcal C_j$.  For any simple summand $Z$ of $X\otimes Y$, we can find some simple summands $X_0\hookrightarrow X$ and $Y_0\hookrightarrow Y$ such that $Z$ is a simple summand of $X_0\otimes Y_0$.  Since each of the $\mathcal C_k$ are full subcategories, $X_0$ is in $\mathcal C_i$ and $Y_0$ is in $\mathcal C_j$.  By Proposition \ref{Prop:GaloisNontrivialsOverR}, $\lambda_{X_0}^{-1}$, $\lambda_{Y_0}^{-1}$, and $\lambda_{Z}^{-1}$ all exist, so we may apply Lemma \ref{Lem:TrivalentGaloisCompatibility} to the inclusion morphism $f=\iota:Z\hookrightarrow X_0\otimes Y_0$ to see that $Z$ is in $\mathcal C_{i+j}$.  Since $Z$ was arbitrary, all simple summands of $X\otimes Y$ are contained in $\mathcal C_{i+j}$, so the entire object $X\otimes Y$ must be in $\mathcal C_{i+j}$ as well.

    Finally, the definition of $\mathcal C_1$ immediately implies that the existence of Galois nontrivial objects is equivalent to the faithfulness of the grading.
\end{proof}

\begin{definition}
      We will refer to the grading established in Theorem \ref{Thm:GaloisGrading} as the \emph{Galois grading}.
\end{definition}



To end this section, we record a corollary of Theorem \ref{Thm:GaloisGrading} that will be helpful in Section \ref{Sec:ComplexGalois}.

\begin{corollary}\label{Cor:XXIsC0}
    If $\mathcal C$ is a fusion category over $\mathbb R$ that contains Galois nontrivial objects, then for any simple object $X$ in $\mathcal C$, the object $X\otimes X$ is in $\mathcal C_0$.
\end{corollary}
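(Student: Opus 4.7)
The plan is to deduce this as an essentially immediate consequence of Theorem \ref{Thm:GaloisGrading}. The hypothesis that $\mathcal C$ contains Galois nontrivial objects is exactly what is needed to invoke that theorem and obtain the Galois grading $\mathcal C \simeq \mathcal C_0 \oplus \mathcal C_1$ by the group $\text{Gal}(\mathbb C/\mathbb R) \cong \mathbb Z/2\mathbb Z$.

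First I would observe that any simple object $X$ is contained in a single homogeneous component of the grading: indeed, each $\mathcal C_k$ is defined as the full subcategory generated under direct sums by the simples with the specified Galois behavior, so $X$ lies in $\mathcal C_0$ if it is Galois trivial and in $\mathcal C_1$ if it is Galois nontrivial. Thus there exists $i \in \mathbb Z/2\mathbb Z$ with $X \in \mathcal C_i$.

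Next I would apply the compatibility of the grading with the monoidal structure, established in the proof of Theorem \ref{Thm:GaloisGrading}: if $X \in \mathcal C_i$ and $Y \in \mathcal C_j$, then $X \otimes Y \in \mathcal C_{i+j}$. Setting $Y = X$ yields $X \otimes X \in \mathcal C_{2i}$. Since the grading group is $\mathbb Z/2\mathbb Z$, we have $2i = 0$ regardless of whether $i = 0$ or $i = 1$, hence $X \otimes X \in \mathcal C_0$, as claimed.

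There is no real obstacle here; the whole content has been packaged into Theorem \ref{Thm:GaloisGrading}, and the corollary is just the observation that $\mathbb Z/2\mathbb Z$ is $2$-torsion. The only thing worth flagging is that one should be explicit that every simple object sits in one of the two homogeneous components (as opposed to being a nontrivial direct sum across components), which follows from how $\mathcal C_0$ and $\mathcal C_1$ were defined in terms of simples just before the statement of Theorem \ref{Thm:GaloisGrading}.
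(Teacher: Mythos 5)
Your proof is correct and matches the paper's intent exactly: the paper states this corollary without proof, treating it as an immediate consequence of Theorem \ref{Thm:GaloisGrading}, and your argument (a simple object lies in a single homogeneous component $\mathcal C_i$, so $X\otimes X\in\mathcal C_{2i}=\mathcal C_0$ since the grading group is $2$-torsion) is precisely the intended reasoning.
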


\section{Tambara-Yamagami Fusion Categories: Non-Split Case}\label{Sub:Non-SplitTYCats}

We would like to investigate non-split generalizations of the (split) Tambara-\newline Yamagami categories described in Subsection \ref{Sub:SplitTYCats}. In particular, we will focus on the case where $\mathbb K = \mathbb R$. In making our generalization, the features that we would like to preserve are the following:
\begin{enumerate}
    \item the set of (isomorphism classes of) simple objects consist of a group $A$ of invertible objects, together with a single self-dual simple object $m$, and
    \item the object $m\otimes m$ is a direct sum of invertible objects, that is, the multiplicity of $m$ in $m\otimes m$ is 0.
\end{enumerate}

A natural starting point would be to allow various simple objects to be non-split, but not all division algebras are possible. We use the results from Section \ref{Sec:FusionCats/R} to narrow these options down to only three possibilities. In particular, Theorem \ref{Thm:FrobeniusDivAlg/R} implies that when working over $\mathbb R$ there are only three ways for a simple object to be non-split. Moreover, Corollary \ref{Cor:AllRealOrAllComplex} shows that either all the \emph{invertible} objects are real or all the \emph{simple} objects are complex.

If all the invertible objects are real, then $m$ is the only simple object that could be non-split, and so $m$ can either be complex or quaternionic.
If all simple objects are complex, either the category is fusion over $\mathbb C$ or not. The original theorem of Tambara and Yamagami (see Theorem \ref{OGTYTheorem}) already covers the case where $\mathcal C$ is fusion over $\mathbb C$, and so we assume that $\mathcal C$ is only fusion over $\mathbb R$. The results of Subsection \ref{sSec:GaloisNontrivial} 
show that $\mathcal C$ must contain Galois nontrivial simple objects and hence, by Theorem \ref{Thm:GaloisGrading}, $\mathcal C$ is faithfully Galois graded. Moreover, Corollary \ref{Cor:XXIsC0} implies that the object $m\otimes m$ is Galois trivial, and by our assumption on the fusion rules, this forces all the invertible objects to be Galois trivial.  Then, since there must be at least one Galois nontrivial object, it must be $m$.

Summarizing, there are three new possibilities for non-split Tambara-Yamagami categories over $\mathbb R$:

\begin{enumerate}[itemsep=2mm,leftmargin=20mm, rightmargin=38mm]
    \item[Case 1:] all invertible objects are real and $m$ is quaternionic,
    \item[Case 2:] all invertible objects are real and $m$ is complex, or
    \item[Case 3:] all simple objects are complex and $m$ is the unique Galois nontrivial simple.
\end{enumerate}

Before starting with the analysis of each of the different cases, we highlight
some important aspects that are common to all three contexts.

In each case we begin by fixing an isomorphism between $\End(m)$ and the relevant division algebra $\mathbb D\in\{\mathbb C,\mathbb H\}$. We use this fixed isomorphism to identify elements of the algebra $e\in\mathbb D$ with endomorphisms $e:m\to m$ in the category.  In the complex Galois case, we further identify $\End(\1)$ with $\End(m)=\mathbb C$ using the left embedding $\lambda_m$ (see Definition \ref{def: lambda and rho}), and thus force $\rho_m$ to be complex conjugation.

We consider the following $\mathbb R$-vector spaces
\[
    \End(m),\quad \Hom(a\otimes m,m),\quad \Hom(m\otimes a,m),\quad \Hom(m\otimes m,a),
\]
which are all isomorphic by rigidity of the fusion category.
This allows us to compute the multiplicity of each invertible object in $m\otimes m$
\[\End(m) \cong\Hom(a,m\otimes m) \cong \Hom\Big(a\,,\,\bigoplus_{b\in A} b^{\oplus n_b}\Big) \cong \bigoplus_{b\in A} \delta_{a,b}\End(b)^{\oplus n_b}\cong \End(a)^{\oplus n_a}.\]
By Proposition \ref{Prop:AllSameDivAlg}, $\End(a)\cong\End(\1)$ is a field, so for every $a\in A$, the multiplicity of $a$ in $m\otimes m$ is the number $n_a = \dim_{\End(\1)}(\End(m))$.
This gives 
the following variation of the split fusion rules considered in \cite{TAMBARA1998692} for the non-split cases:
\begin{equation*}
	m\otimes m = 
 \dim_{\End(\1)}\big(\End(m)\big)\cdot\bigoplus_{a \in A} a\;.
\end{equation*}

In each of the following sections we proceed first by choosing basis vectors for the hom spaces and then by writing down the coordinate matrix of the associators in terms of the chosen basis. Explicitly, precomposition with the associator $\alpha_{W,X,Y}:(W\otimes X)\otimes Y\to W\otimes (X\otimes Y)$ produces a map on hom spaces
\[\Hom(\alpha_{W,X,Y}\,,\, Z):\Hom\big(W\otimes(X\otimes Y)\,,\,Z\big)\to\Hom\big((W\otimes X)\otimes Y\,,\,Z\big)\,,\]
for each target object $Z\in \mathcal C$. Here we introduce the notation we will use, that follows Tambara and Yamagami's original notation from \cite{TAMBARA1998692}.
There are isomorphisms
\[\Hom\big(W\otimes(X\otimes Y)\,,\,Z\big)\cong \bigoplus_{U\text{ simple}}\Hom(W\otimes U\,,\,Z)\mathop{\otimes}\limits_{\End(U)}\Hom(X\otimes Y\,,\,U)\,,\]
\[\Hom\big((W\otimes X)\otimes Y\,,\,Z\big)\cong\bigoplus_{V\text{ simple}}\Hom(V\otimes Y\,,\,Z)\mathop{\otimes}\limits_{\End(V)}\Hom(W\otimes X\,,\,V)\,.\]
By composing these with the map $\Hom(\alpha_{W,X,Y},Z)$, we arrive at a more concrete description of the associator.
\begin{definition}\label{Def:TetrahedralTrans}
    The tetrahedral transformation $\tetr{W}{X}{Y}{Z}$ is the composition indicated in the following commutative diagram
    \[\begin{tikzcd}[ampersand replacement=\&, column sep=20]
    	{\Hom\big(W\otimes(X\otimes Y)\,,\,Z\big)} \&\& {\bigoplus_{U}\Hom(W\otimes U\,,\,Z)\mathop{\otimes}\limits_{\End(U)}\Hom(X\otimes Y\,,\,U)} \\
    	{\Hom\big((W\otimes X)\otimes Y\,,\,Z\big)} \&\& {\bigoplus_{V}\Hom(V\otimes Y\,,\,Z)\mathop{\otimes}\limits_{\End(V)}\Hom(W\otimes X\,,\,U)\,.}
    	\arrow["\cong"', from=1-3, to=1-1]
    	\arrow["{\Hom(\alpha_{W,X,Y},Z)}", from=1-1, to=2-1]
    	\arrow["\cong", from=2-1, to=2-3]
    	\arrow["{\tetr{W}{X}{Y}{Z}}", from=1-3, to=2-3]
    \end{tikzcd}\]
\end{definition}

\begin{remark}
    The name tetrahedral transformations comes from \cite{TAMBARA1998692}, and is not standard.  In more modern language these are often called $F$-symbols or $F$-matrices (see e.g. \cite{bondersonOnInvariants}).
\end{remark}


The notation that is common throughout the three next sections
follows the conventions in \cite{TAMBARA1998692}.  In each section, corresponding to each of the three cases above, there will be a preferred way of constructing nonzero (and hence surjective) morphisms 
\begin{align*}
    [a,b]&\in\Hom(a\otimes b, ab)\,,\\
    [a,m]&\in\Hom(a\otimes m,m)\,,\\
    [m,a]&\in\Hom(m\otimes a,m)\,, \text{ and}\\
    [a]&\in\Hom(m\otimes m,a)\,.
\end{align*}

By Schur's Lemma (Lemma \ref{SchursLemma}), the first three will be isomorphisms, and the map $[a]$ will only be surjective.  Once $[a]$ is chosen, by semisimplicity, there is a splitting $[a]':a\to m\otimes m$.

In the first case, when $\1$ is real and $m$ is quaternionic, we set $S=\{1, i, j, k\}$. In the second case, when $\1$ is real and $m$ is complex, we set $S=\{1,i\}$.  In the third case, we can set $S=\{1\}$.  In all cases, we define the following useful map $[a]^\dagger$.
 
\begin{definition}\label{Def:TheDagger} Let $a\in A$.  If $m$ is Galois nontrivial, then set $[a]^\dagger=[a]'$.  In the other cases, proceed with the following construction.

The $\mathbb R$-linear map $T:\End(m)\to\mathbb R$ is given by the formula
\[[a](\id_m\otimes e)[a]'\;=\;T(e)\cdot\id_{a}\,,\]
for $e\in \End(m)$.

The map $[a]^\dagger:\1\to m\otimes m$ is given by the formula
    \[[a]^\dagger:=\frac{\sum_{s\in S}T(s)\cdot(\id_m\otimes s)[a]'}{\sum_{r\in S}T(r)^2}\,.\]
\end{definition}

Notice that the denominator in the definition of $[a]^\dagger$ is nonzero because $T(1)=1$. A direct consequence of this definition is the following property.

\begin{proposition}\label{Prop:RealPartWDagger}
    Suppose $\1$ is real and $m$ is either complex or quaternionic (case 1 or case 2 above).  The map $[a]^\dagger$ satisfies the formula $$[a](\id_{m}\otimes e)[a]^\dagger=\Re(e)\cdot\id_a\,,$$ where $\Re(e)$ is the real part of $e\in\End(m)$.
\end{proposition}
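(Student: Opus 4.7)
The plan is to reduce the stated identity to a scalar computation in $\End(a)\cong\mathbb R$ and then verify it by exploiting the multiplicative structure of $\mathbb D:=\End(m)\in\{\mathbb C,\mathbb H\}$.

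First, I would substitute the definition of $[a]^\dagger$ into the left-hand side and pull the sum out of $[a]\circ(\id_m\otimes e)\circ(-)$. Using bifunctoriality of $\otimes$, one has $(\id_m\otimes e)(\id_m\otimes s)=\id_m\otimes(es)$, so
\[[a](\id_m\otimes e)[a]^\dagger \;=\;\frac{\sum_{s\in S}T(s)\,[a](\id_m\otimes es)[a]'}{\sum_{r\in S}T(r)^2}\;=\;\frac{\sum_{s\in S}T(s)\,T(es)}{\sum_{r\in S}T(r)^2}\cdot\id_a.\]
Note that $T(1)=1$ (from $[a]\circ[a]'=\id_a$), so the denominator is nonzero. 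It therefore suffices to prove the real-scalar identity
\[\sum_{s\in S}T(s)\,T(es)\;=\;\Re(e)\cdot\sum_{r\in S}T(r)^2\]
for every $e\in\mathbb D$.

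Both sides are $\mathbb R$-linear in $e$, so it is enough to test the identity on the basis $S$. For $e=1$, $T(es)=T(s)$ and both sides equal $\sum_r T(r)^2$. For $e\in S\setminus\{1\}$ we have $\Re(e)=0$, so the claim reduces to $\sum_s T(s)T(es)=0$. In the complex case ($S=\{1,i\}$), this is $T(1)T(i)+T(i)T(-1)=T(i)-T(i)=0$ by $\mathbb R$-linearity of $T$. In the quaternionic case, I would check $e=i,j,k$ individually: for each such $e$, the map $s\mapsto es$ sends $\{1,i,j,k\}$ into $\pm\{1,i,j,k\}$, and the four summands fall into two pairs that differ only by a sign. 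For instance, when $e=i$, the quaternion products give $(1,i,j,k)\mapsto(i,-1,k,-j)$, and the sum groups as $\bigl(T(1)T(i)+T(i)T(-1)\bigr)+\bigl(T(j)T(k)+T(k)T(-j)\bigr)=0$, using $T(-x)=-T(x)$. The cases $e=j$ and $e=k$ are analogous.

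The only mild obstacle is the case-by-case sign tracking in the quaternionic setting, which is purely mechanical once the multiplication table of $\mathbb H$ is written down. The conceptual content is that $[a]^\dagger$ is a $T$-weighted combination of the $\mathbb R$-basis $\{(\id_m\otimes s)[a]':s\in S\}$ of $\Hom(a,m\otimes m)$, and that weighting is designed precisely so that post-composition with $[a]\circ(\id_m\otimes e)$ kills the ``imaginary'' components of $e$ and returns exactly $\Re(e)$, regardless of the unknown values $T(s)$ for $s\neq 1$.
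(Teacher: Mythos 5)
Your proof is correct, and it is precisely the direct verification the paper has in mind when it states the proposition as ``an easy consequence of the definition'': expand $[a]^\dagger$, use $[a](\id_m\otimes es)[a]'=T(es)\cdot\id_a$, and check the resulting scalar identity $\sum_{s\in S}T(s)T(es)=\Re(e)\sum_{r\in S}T(r)^2$ on the basis $S$ by the sign-cancellation pairing. Nothing is missing; your reduction and the case checks in both the complex and quaternionic settings are sound.
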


Using the  maps $[a]^\dagger$, we produce orthogonal projections $$(\id_m\otimes s)[a]^\dagger[a](\id_m\otimes \overline{s})\,:\,m\otimes m\to m\otimes m\,,$$ and, in this way, we identify summands of $m\otimes m$ with pairs $(a,s)$, where $a\in A$ and $s\in S$. Notice that $[a]^\dagger$ is uniquely determined by $[a]$, so changes to $[a]$ will alter $[a]^\dagger$ accordingly.

Using the preferred vectors $[a,m]$, $[m,a]$ and $[a]$, we construct bases for the hom spaces as follows:
\begin{align*}
    \{[a,m](\id_a\otimes s)\}_{s\in S}&\text{ for the space }\Hom(a\otimes m,m)\,,\\
    \{[m,a](s\otimes \id_a)\}_{s\in S}&\text{ for the space }\Hom(m\otimes a,m)\,,\text{ and}\\
    \{[a](\id_m\otimes s)\}_{s\in S}&\text{ for the space }\Hom(m\otimes m,a)\,.
\end{align*}

In these bases, the tetrahedral transformations are determined by what they do on simple tensors of the vectors $[a,m]$, $[m,a]$, and $[a]$.  The general naming scheme of the matrix coefficients of the tetrahedral transformations is shown in the table below.

\begin{center}
\begin{tabular}{|c|c|c|c|c|}\hline
     Tetrahedral & $\tetr{a}{b}{c}{abc}$ & $\tetr{m}{a}{b}{m}$ & $\tetr{a}{m}{b}{m}$ & $\tetr{a}{b}{m}{m}$ \\\hline
     Coefficient & $\alpha(a,b,c)$ & $\alpha_1(a,b)$ & $\alpha_2(a,b)$ & $\alpha_3(a,b)$ \\\hline\hline
     Tetrahedral & $\tetr{a}{m}{m}{b}$ & $\tetr{m}{a}{m}{b}$ & $\tetr{m}{m}{a}{b}$ & $\tetr{m}{m}{m}{m}$\\\hline
     Coefficient & $\beta_1(a,b)$ & $\beta_2(a,b)$ & $\beta_3(a,b)$ & $\big(\gamma(a,b)\big)_{a,b}$\\\hline
\end{tabular}
\end{center}



           
            
\begin{remark}\label{Rem:GammaMatrixDim}
    The function $\alpha=\alpha(a,b,c)$ will take values in $\End(\1)$, and all of the $\alpha_i$'s and $\beta_j$'s will take values in $\End(m)$.  The hom space
    \[\Hom\big(m\otimes(m\otimes m),m\big)\cong\Hom\big((m\otimes m)\otimes m,m\big)\cong\big(\End(m)\otimes_{\End(\1)}\End(m)\big)^{\oplus|A|}\]
    has dimension $n=|A|\cdot\dim_{\End(\1)}(\End(m))^2$ over the field $\End(\1)$.  Thus in general $\gamma$ is described by a matrix in $GL_n(\End(\1))$.  These associator coefficients appear to hold a large amount of information but, in each case, naturality allows for a significant reduction in complexity.
\end{remark}

The combinatorics imply that there are 16 different types of pentagon equations to be solved in each case.  In order to determine when two of our categories are monoidally equivalent, there are 4 matrix coefficients for the tensorators, and they are subject to 8 coherence equations.  The tensorator naming conventions are shown in the table below.

\begin{center}
\begin{tabular}{|c|c|c|c|c|}\hline
     Tensorator & $J_{a,b}$ & $J_{a,m}$ & $J_{m,b}$ & $J_{m,m}$ \\\hline
     Coefficient & $\theta(a,b)^{-1}$ & $\varphi(a)^{-1}$ & $\psi(b)^{-1}$ & $\big(\,\omega(a)^{-1}\,\big)_a$ \\\hline
\end{tabular}
\end{center}

\begin{remark}\label{Rem:InversesInTheTensorators}
    Note the unfortunate presence of inverses.  These inverses appear because we aim to align our notation with the change of basis transformations in \cite[page 700]{TAMBARA1998692}. These change of basis transformations are monoidal equivalences, in which the coefficients more naturally appear on the opposite side of the equation.
\end{remark}

\begin{note}[Rightmost Factor Convention]\label{Note:RightmostFactor}
    When writing down tetrahedral transformations in terms of the associator coefficients, the formulas become very wide.  For the sake of compactness and legibility, we develop some conventions.  Whenever an element of $\End(m)$ appears immediately to the right of a morphism, this denotes precomposition with that morphism on the rightmost factor of $m$ that appears in the input, tensored with the appropriate number of identity morphisms on either side.  For example, $[a,m]t = [a,m] \circ (\id_a\otimes t)$ and $[m,a]t = [m,a] \circ (t\otimes \id_a)$.  For $[a]$, the case where there are two factors of $m$ in the input, $[a]t$ will denote $[a] \circ (\id_m\otimes t)$, and we reserve the notation $[a]\triangleleft t$ for the composition $[a] \circ (t\otimes\id_m)$.  The category is assumed to be linear over $\bb R$, so we will simply write $r\cdot-$ to indicate scalar multiplication by a real number $r\in\bb R$.
    
    In the process of deriving the pentagon equations, it becomes necessary to bring all of the coefficients into either $\End(\1)$ or $\End(m)$.  Once everything lies within a single vector space, we can compare coefficients of our basis vectors to arrive at the desired equations.  Since some of the pentagon equations involve tensor products of multiple copies of $m$, it is necessary to make an arbitrary choice of where to put all the coefficients.  In keeping with the above conventions, all morphisms in $\End(m)$ will be moved to the rightmost factor of $m$ that appears in the \emph{input} of the tetrahedral transformation (see Definition \ref{Def:TetrahedralTrans}).  This can always be achieved by passing morphisms across a relative tensor product, or applying Relations \ref{nice-basis}, Definition \ref{Def:The-g-Symbol}, Definition \ref{Def:The-a-symbol}, or possibly iterated compositions thereof.
\end{note}

    \section{Analysis of the Real-Quaternionic Case\label{Sec:Real/Quaternionic}}

We will now construct the non-split Tambara-Yamagami categories $\mathcal C_{\mathbb H}(A,\chi,\tau)$.

\subsection{Choosing a preferred basis}

The following observation makes the choice of a basis simpler.

\begin{proposition}\label{Prop:basis}
Let $V$ be an $(\mathbb H,\mathbb
H)$-bimodule. If $V$ is 4-dimensional as an $\bb{R}$-vector space then there exists a nonzero $v \in V$ such that $h . v = v . h$,
for all $h \in \mathbb{H}$.
\end{proposition}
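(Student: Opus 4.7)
The strategy is to identify the algebra acting on $V$ and use simplicity. An $(\mathbb{H},\mathbb{H})$-bimodule is the same data as a left module over the $\mathbb{R}$-algebra $\mathbb{H}\otimes_{\mathbb{R}}\mathbb{H}^{\text{op}}$. The plan is to observe that the action of $\mathbb{H}$ on itself by left and right multiplication gives an algebra homomorphism
\[
\Phi\colon \mathbb{H}\otimes_{\mathbb{R}}\mathbb{H}^{\text{op}}\longrightarrow \text{End}_{\mathbb{R}}(\mathbb{H})\cong M_4(\mathbb{R}),\qquad h_1\otimes h_2\longmapsto \big(x\mapsto h_1 x h_2\big),
\]
and argue it is an isomorphism. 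Both sides have $\mathbb{R}$-dimension $16$, so it suffices to show $\Phi$ is injective; this is a standard computation and is also forced by the fact that the domain is a central simple $\mathbb{R}$-algebra (its center equals $\mathbb{R}$, since the centers of $\mathbb{H}$ and $\mathbb{H}^{\text{op}}$ are both $\mathbb{R}$), so $\ker\Phi$ is a two-sided ideal of a simple algebra and must vanish.

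With this identification, the category of $(\mathbb{H},\mathbb{H})$-bimodules is the category of $M_4(\mathbb{R})$-modules. Since $M_4(\mathbb{R})$ is simple with unique simple module $\mathbb{R}^4$ (of $\mathbb{R}$-dimension $4$), any bimodule of $\mathbb{R}$-dimension $4$ is isomorphic to the unique simple bimodule. One realization of that simple bimodule is $\mathbb{H}$ itself under left and right multiplication, so one concludes that $V\cong \mathbb{H}$ as bimodules.

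Finally, in the standard bimodule $\mathbb{H}$, the element $1\in\mathbb{H}$ is nonzero and satisfies $h\cdot 1 = h = 1\cdot h$ for all $h\in\mathbb{H}$; indeed the subspace of such elements is exactly $Z(\mathbb{H})=\mathbb{R}$, which is one-dimensional and nonzero. Transporting $1$ across the bimodule isomorphism $\mathbb{H}\xrightarrow{\sim} V$ produces the desired $v\in V$.

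The only real obstacle is justifying $\mathbb{H}\otimes_{\mathbb{R}}\mathbb{H}^{\text{op}}\cong M_4(\mathbb{R})$; after that the argument is formal. If one wants to avoid invoking this isomorphism, an alternative approach is to define the $\mathbb{R}$-linear subspace $W:=\{v\in V\mid hv=vh\text{ for all }h\in\mathbb{H}\}=\text{Hom}_{(\mathbb{H},\mathbb{H})}(\mathbb{H},V)$ and show it is nonzero by Schur-type reasoning: $\mathbb{H}$ is simple as an $(\mathbb{H},\mathbb{H})$-bimodule, $V$ has the same $\mathbb{R}$-dimension, hence both are simple bimodules of the same dimension over the simple algebra $\mathbb{H}\otimes\mathbb{H}^{\text{op}}$ and therefore isomorphic, forcing $W\neq 0$. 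Either route produces the claimed $v$.
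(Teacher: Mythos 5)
Your proof is correct and follows essentially the same route as the paper's: identify $(\mathbb H,\mathbb H)$-bimodules with modules over $\mathbb H\otimes_{\mathbb R}\mathbb H^{\mathrm{op}}\cong M_4(\mathbb R)$, conclude that any $4$-dimensional bimodule is isomorphic to the regular bimodule $\mathbb H$, and transport $1$ across that isomorphism. The only difference is that you supply the justification for $\mathbb H\otimes_{\mathbb R}\mathbb H^{\mathrm{op}}\cong M_4(\mathbb R)$, which the paper simply asserts.
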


\begin{proof}
    An $(\mathbb H,\mathbb H)$-bimodule is the same as an $\mathbb H\otimes_{\mathbb R}\mathbb H^{op}$-module.  Since $\mathbb H\otimes_{\mathbb R}\mathbb H^{op}\cong M_4(\mathbb R)$ as algebras, there is a unique simple $(\mathbb H,\mathbb H)$-bimodule up to isomorphism. Any such bimodule is simple if and only if it is 4-dimensional over $\mathbb R$.  Let us choose some bimodule isomorphism $\phi:\mathbb H\to V$ from the trivial bimodule to our given bimodule $V$.  Then, the vector $v:=\phi(1)$ has the desired property. 
\end{proof}

Consider the following 4-dimensional (as $\mathbb R$-vector spaces) hom spaces
\[
    \Hom(a\otimes m,m),\quad \Hom(m\otimes a,m),\quad \text{and }\;\Hom(m\otimes m,a).
\]

Proposition \ref{Prop:basis} shows that there is always a choice 
of non-zero morphisms $[a,m]$, $[m,a]$, and $[a]$ such that the quaternions commute with them.

One subtlety here is that the space $\Hom(m\otimes m,a)$ is most naturally a right $\mathbb H\otimes_{\mathbb R}\mathbb H$-module.  This can be thought of as an $(\mathbb H^{op},\mathbb H)$-bimodule.  Quaternionic conjugation $h\mapsto\overline{h}$ provides an isomorphism $\mathbb H^{op}\to\mathbb H$, and this can be used to transform $\Hom(m\otimes m,a)$ into an $(\mathbb H,\mathbb H)$-bimodule.
By Proposition \ref{Prop:basis}, there is a preferred vector $[a]$.  Since we needed to apply quaternionic conjugation to one of the actions, the resulting `commutation' property for $[a]$ involves conjugation.


Summarizing, bases have been chosen for the 
hom spaces using the morphisms $[a,m], [m,a],$ and $[a]$ such that
\begin{align}\label{nice-basis}
\begin{split}
	[a,m](id_a \otimes h) =&\ h[a,m],\\
	[m,a](h \otimes id_a) =&\ h[m,a],\\
	[a](id_m \otimes h) =&\ [a](\bar{h} \otimes id_m).
 \end{split}
\end{align}

\subsection{The associators}\label{sub:QuatAssoc}

With our conventions established, the tetrahedral transformations are as follows
\begin{align}
	\tetr{a}{b}{c}{abc}:& [a,bc]\otimes[b,c] \mapsto \alpha(a,b,c)\cdot[ab,c]\otimes[a,b] ,\nonumber\\
	\tetr{a}{b}{m}{m}:&   [a,m]\otimes [b,m] \mapsto \Big([ab,m]\alpha_3(a,b)\Big)\otimes[a,b],\nonumber\\
	\tetr{a}{m}{b}{m}:&   [a,m]\otimes[m,b]  \mapsto [m,b]\otimes\Big([a,m]\alpha_2(a,b)\Big),\nonumber\\
	\tetr{m}{a}{b}{m}:&   [m,ab]\otimes[a,b] \mapsto  [m,b]\otimes\Big([m,a]\alpha_1(a,b)\Big),\nonumber\\
	\tetr{a}{m}{m}{b}:&   [a,a^{-1}b]\otimes[a^{-1}b]  \mapsto \Big([b]\beta_1(a,b)\Big)\otimes[a,m] ,\nonumber\\
	\tetr{m}{a}{m}{b}:&   [b]\otimes[a,m]  \mapsto \Big([b]\beta_2(a,b)\Big)\otimes[m,a] ,\nonumber\\
	\tetr{m}{m}{a}{b}:&   [b]\otimes[m,a]  \mapsto [ba^{-1},a]\otimes \Big([ba^{-1}]\beta_3(a,b)\Big),\nonumber\\
	\tetr{m}{m}{m}{m}:&   [m,a]\otimes[a]\mapsto \sum_{b\in A\,,\,s,t\in S} \gamma(a,b,s,t)\Big([b,m]t\otimes [b]s\Big)\label{Gamma_quat}.
\end{align}

We will prove that the $\alpha_i$'s and $\beta_j$'s are in fact real-valued by showing they are in the center of $\bb{H}$. 
There are two ways of doing this; one for 
the $\alpha_i$'s and another for the $\beta_j$'s.

\begin{lemma}
  The  $\alpha_i$'s are real-valued functions.
\end{lemma}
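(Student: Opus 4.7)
The plan is to exploit naturality of the associator together with the intertwining properties \eqref{nice-basis} of our chosen basis morphisms. Since $Z(\mathbb H)=\mathbb R$ by the Frobenius theorem (or a direct computation), it suffices to show that each $\alpha_i(a,b)\in\End(m)\cong\mathbb H$, viewed as a quaternion, commutes with every $h\in\mathbb H$. In each case I would insert a generic quaternion $h$, regarded as an endomorphism of $m$, into the appropriate $m$-slot of the associator, slide it across using the relations in \eqref{nice-basis}, and compare the two resulting expressions.

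Concretely, for $\alpha_3(a,b)$: naturality of $\alpha_{a,b,m}$ in its last argument says that $\id_a\otimes\id_b\otimes h$ commutes with $\alpha_{a,b,m}$. Translating through the defining diagram of $\tetr{a}{b}{m}{m}$ and invoking $[b,m](\id_b\otimes h)=h[b,m]$ and $[ab,m](\id_{ab}\otimes h)=h[ab,m]$ from \eqref{nice-basis}, the quaternion $h$ can be pulled through both the source $[b,m]\otimes[a,b]$ and the target $[a,b]\otimes\bigl([ab,m]\alpha_3(a,b)\bigr)$. Matching the results yields $h\cdot\alpha_3(a,b)=\alpha_3(a,b)\cdot h$ for all $h$, so $\alpha_3(a,b)\in Z(\mathbb H)=\mathbb R$. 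The cases of $\alpha_2$ and $\alpha_1$ proceed identically: for $\alpha_2(a,b)$ we use naturality of $\alpha_{a,m,b}$ in its middle slot together with $[a,m](\id_a\otimes h)=h[a,m]$ and $[m,b](h\otimes\id_b)=h[m,b]$, and for $\alpha_1(a,b)$ we use naturality of $\alpha_{m,a,b}$ in its first slot together with the intertwiners for $[m,a]$, $[m,b]$, and $[m,ab]$. In every case the same manipulation delivers $h\alpha_i(a,b)=\alpha_i(a,b)h$.

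The main (mild) obstacle is bookkeeping: since $\mathbb H$ is noncommutative and each basis morphism has $m$ as either its source or its target, one must be careful to invoke the correct side of the intertwining relation from \eqref{nice-basis} at each step, and to track whether $h$ ends up to the left or right of $\alpha_i(a,b)$. Once this is handled consistently, centrality—and hence reality—of each $\alpha_i(a,b)$ falls out immediately; no solution of pentagon equations or further combinatorial input is required, only naturality and the special intertwining property guaranteed by Proposition~\ref{Prop:basis}.
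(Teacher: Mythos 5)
Your argument is correct and is essentially the paper's own proof: naturality of the associator in the $m$-slot, combined with the intertwining relations \eqref{nice-basis} for the chosen basis morphisms, forces each $\alpha_i(a,b)$ to commute with every $h\in\mathbb H$ and hence to lie in $Z(\mathbb H)=\mathbb R$. The only cosmetic difference is that you work out $\alpha_3$ in detail where the paper works out $\alpha_2$.
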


\begin{proof}
We give the proof for $\alpha_2$.  The arguments for $\alpha_1$, and $\alpha_3$ are similar.  Let $h\in\mathbb H$ and consider the diagram below

\[
    \hspace{9mm}\begin{tikzcd}
		m \ar[ddd,"h"]\ar[rrr,"{\alpha_2(a,b)}"] &&&m\ar[ddd,"h"]& \\ 
		  &(a\otimes m)\otimes b \ar[ul,"f_1"]\ar[r,"\alpha"]\ar[d,"{(id_a \otimes h)\otimes id_b}"] 
		  & a\otimes (m\otimes b)\ar[d,"{id_a \otimes (h\otimes id_b)}"]\ar[ur,"f_2"]  \\
	&(a\otimes m)\otimes b\ar[dl, "f_1"] \ar[r,"\alpha"] &a\otimes (m\otimes b) \ar[dr,"f_2"] \\
		m \ar[rrr,"{\alpha_2(a,b)}"] &&&m \\
	\end{tikzcd}
\]

\vspace{-11mm}
\begin{gather}
	f_1 = [m,b] \circ \big([a,m]\otimes\id_b\big)\,,\,\;f_2 = [a,m] \circ \big(\id_a\otimes[m,b]\big)\,.\nonumber
\end{gather}

By naturality of the associator, the middle square commutes.  The top and bottom quadrangles commute by the definition of $\alpha_2(a,b)$ and our choices of $[a,m]$ and $[m,b]$.  The quadrangles on the left and right commute by our choice of basis vectors $[a,m]$ and $[m,b]$.  It follows that the outer rectangle commutes.

Since $h\in\mathbb H$ was arbitrary, $\alpha_2(a,b)$ must lie in the center of $\mathbb H$, which is $\mathbb R$.  Since $a,b\in A$ were arbitrary, all values of $\alpha_2$ must be real numbers.

\end{proof}


\begin{lemma}
The  $\beta_j$'s are real-valued functions.    
\end{lemma}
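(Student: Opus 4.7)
The plan is to adapt the diagram chase used for $\alpha_2$, accounting for the additional basis vectors $[c]\colon m \otimes m \to c$ that appear in the tetrahedrals of the $\beta_j$. For each $j \in \{1,2,3\}$ and each $a,b \in A$, I would construct a commutative rectangle analogous to diagram (\ref{alpha2real}) in which the associator forms the central horizontal arrows, the basis-vector compositions $f_1$ and $f_2$ form the diagonals, an arbitrary quaternion $h \in \mathbb{H}$ acts vertically on a carefully chosen $m$-factor, and $\beta_j(a,b)$ appears as the top and bottom horizontal arrow (an endomorphism of $m$). Naturality of $\alpha$ forces the central square to commute; the basis-vector relations in (\ref{nice-basis}) make the side squares commute; the tetrahedral formulas in Subsection \ref{sub:QuatAssoc} make the top and bottom quadrangles commute by the defining property of $\beta_j(a,b)$. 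The outer rectangle will then yield $h \cdot \beta_j(a,b) = \beta_j(a,b) \cdot h$ for every $h \in \mathbb{H}$, placing $\beta_j(a,b)$ in $Z(\mathbb{H}) = \mathbb{R}$.

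The key observation that makes this work is parity-based: each of the tetrahedrals $\tetr{a}{m}{m}{b}$, $\tetr{m}{a}{m}{b}$, and $\tetr{m}{m}{a}{b}$ involves exactly one morphism of the form $[c]$ on the source side and exactly one on the target side. By the third relation of (\ref{nice-basis}), each such $[c]$ introduces one quaternionic conjugation when an $h$ is commuted across it. Since one conjugation arises on each side, they cancel, and one obtains the untwisted identity $h\beta_j = \beta_j h$ rather than a twisted version such as $h\beta_j = \beta_j \bar{h}$; the latter would only force $\beta_j$ to annihilate the purely imaginary quaternions, which is insufficient.

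The main obstacle I anticipate is bookkeeping: one must select the correct $m$-factor on which $h$ acts so that naturality of $\alpha$ legitimately transports the $h$-action between the two bracketings (in particular, $h$ cannot be placed on the invertible $a$, where it has no meaning) and so that the conjugations coming from the two $[c]$-maps genuinely align and cancel. Case-by-case inspection of the three $\beta_j$ shows that acting on the $m$-factor adjacent to $a$ in each parenthesization produces the needed alignment. Once the diagram is drawn with this choice, every subrectangle commutes for one of the three reasons listed above, the outer rectangle commutes, and the arbitrariness of $h$ yields $\beta_j(a,b) \in \mathbb{R}$ for each $j$ and each $a,b \in A$.
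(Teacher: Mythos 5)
Your core mechanism is the right one, and it is exactly the first half of the paper's argument: push an arbitrary $h\in\mathbb H$ through the defining relation of $\beta_j$ using naturality of $\alpha$ together with the basis relations (\ref{nice-basis}), and observe that the two maps of type $[c]\colon m\otimes m\to c$ (one on each side of the tetrahedral) each contribute one quaternionic conjugation, so the twists cancel and the relation comes out untwisted. That parity observation is precisely why the lemma is true.

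However, the diagram you describe does not literally exist, and this hides a step you still owe. For $\tetr{a}{m}{m}{b}$, $\tetr{m}{a}{m}{b}$ and $\tetr{m}{m}{a}{b}$ the relevant hom-spaces have target the \emph{invertible} object $b$, not $m$: the coefficient $\beta_j(a,b)$ is an endomorphism of $m$ that enters only as an insertion on one strand of a composite into $b$, e.g.\ $[b]\circ\big([a,m]\otimes\beta_1(a,b)\big)=[a,a^{-1}b]\big(\id_a\otimes[a^{-1}b]\big)\alpha_{a,m,m}$. So, unlike the $\alpha_2$ rectangle (\ref{alpha2real}), whose outer corners are copies of $m$ and whose outer edges are $\alpha_2(a,b)$ and $h$, there is no rectangle here whose outer horizontal edge is $\beta_j(a,b)$. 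What your chase actually produces is an equality of two morphisms into $b$, such as $[b]\big([a,m]\otimes\beta_1(a,b)h\big)=[b]\big([a,m]\otimes h\beta_1(a,b)\big)$, and you must still cancel to conclude $\beta_1(a,b)h=h\beta_1(a,b)$ inside $\mathbb H$. This needs the injectivity of $e\mapsto[b]\big([a,m]\otimes e\big)$ on $\End(m)$; the paper devotes the entire second half of its proof to exactly this point, identifying $\Hom\big((a\otimes m)\otimes m,b\big)$ with $\mathbb H$ as a right $\mathbb H$-module via an isomorphism $\phi$ and cancelling the invertible quaternion $q=\phi\big([b]([a,m]\otimes\id_m)\big)$. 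The repair is short (a nonzero $e\in\mathbb H$ is invertible, so $[b]([a,m]\otimes e)\neq0$), but as written the claim that ``the outer rectangle yields $h\beta_j=\beta_j h$'' is not a conclusion the diagram can deliver. A minor further point: the paper places $h$ on the $m$-strand \emph{not} adjacent to $a$ in the $\beta_1$ case; your adjacent placement also works, since it yields $\bar h\beta_j=\beta_j\bar h$ for all $h$, which is equally sufficient.
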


\begin{proof}
We give the proof for $\beta_1$.  The arguments for $\beta_2$, and $\beta_3$ are similar.

Let $a,b\in A$ and $h\in\mathbb H$.  Naturality of the associator can be combined with Equation \ref{nice-basis} to show that
\begin{align*}
    [b]\big([a,m]\otimes\beta_1(a,b)h\big)&=
    [b]\big([a,m]\otimes h\beta_1(a,b)\big)\,.
\end{align*}

Since $m$ is self-dual and it is fixed by all elements in $A$, there is an isomorphism
\[\Hom\big((a\otimes m)\otimes m\,,\,b\big)\mathop{\longrightarrow}\limits^{\phi}\Hom(m,m)=\mathbb H\,.\]
This map $\phi$ is an isomorphism of right $\mathbb H$-modules. Now we define the quaternion
\[q:=\phi\Big([b]\big([a,m]\otimes \id_m\big)\Big)\,.\]
Since $\phi$ is an isomorphism, the morphism $q\in\mathbb H$ is nonzero and hence invertible.Then we have  that

\[\beta_1(a,b)h\;=\;q^{-1}q\beta_1(a,b)h\;=\;q^{-1}qh\beta_1(a,b)\;=\;h\beta_1(a,b)\,.\]

Thus $\beta_1(a,b)$ commutes with $h$.  Since $h$, $a$, and $b$ were arbitrary, the result follows.

\end{proof}



Finally, we will consider the nature of $\alpha_{m,m,m}$.
With $\End(\1)=\mathbb R$ and $\End(m)=\mathbb H$, the associator coefficient $\alpha_{m,m,m}$ is generically
a matrix in $GL_{16|A|}(\mathbb{R})$ (see Remark \ref{Rem:GammaMatrixDim}). In all
of the sums that follow, $a,b \in A$, $r,s,t,s',t' \in \mathbb H$, and $S =
\{1,i,j,k\}\subset\mathbb H$. For this computation, we fix the following notation:
\[
	\zeta(b,r,s,t) := [b,m]\left([b]\otimes \id_m\right)\big((r\otimes s)\otimes t\big)\,,\hspace{4mm}\text{and}
\]
\[
	{\mathcal{A}} (a):= [m,a](\id_m\otimes [a])\alpha_{m,m,m}\,.
\]
With this notation in hand, we can write
\begin{equation}\label{gamma_first}
	{\mathcal{A}}(a) = \sum_{\substack{b\in A,\\s,t\in S}} \gamma(a,b,s,t)\zeta(b,1,s,t).
\end{equation}
Here the coefficients $\gamma(a,b,s,t)$ are real.  We extend by $\mathbb R$-linearity in the $s$ and $t$ arguments so that, for example, $\gamma(a,b,-i,j):=-\gamma(a,b,i,j)$.  Furthermore, using naturality, we get that 
\begin{gather*}
	 \sum_{\substack{b\in A,\\s,t\in S}}\gamma(a,b,s,t)\zeta(b,1,s,t)\;=\;{\mathcal{A}}(a)\;=\;(r\inv r \circ {\mathcal{A}}(a))\\
     =\;r\inv \circ {\mathcal{A}}(a) \circ ((r\otimes \id_m )\otimes \id_m)\,
	\;=\;\sum_{\substack{b\in A,\\s',t'\in S}}\gamma(a,b,s',t')(r\inv \circ \zeta(b,r,s',t'))\\
    =\;\sum_{\substack{b\in A,\\s',t'\in S}}\gamma(a,b,s',t')\zeta\big(b,r,s',r\inv t'\big)\,
    \;=\;\sum_{\substack{b\in A,\\s',t'\in S}}\gamma(a,b,s',t')\zeta\big(b,1,\overline{r}s',r\inv t'\big)
\end{gather*}
Since these two sums must be equal, by equating the coefficients of the basis vectors we obtain the following relation
\begin{gather*}
    \gamma(a,b,s,t)\;=\;\gamma(a,b,\overline{r}^{-1}s,rt)\,.
\end{gather*}
When $r = \bar{s}$, then $s' = 1, t' = \bar{s}t$, and hence
$\gamma(a,b,s,t) = \gamma(a,b,1,\bar{s}t)$.  In particular, we find that $$\gamma(a,b,i,i) =
\gamma(a,b,j,j)=\gamma(a,b,k,k) = \gamma(a,b,1,1)\,.$$
A similar computation, this time involving $\mathcal A(a)\circ(\id_{m\otimes m}\otimes rr^{-1})$, implies that
\linebreak $\gamma(a,b,s,t) = \gamma(a,b,1,t\bar{s})$.  Thus we find that $\gamma(a,b,s,t)$ is simultaneously equal to $\gamma(a,b,1,\bar{s}t)$ and $\gamma(a,b,1,t\bar{s})$.  In particular, $\gamma(a,b,s,t) = 0$ if $t\neq s$.

We now simplify Equation \ref{gamma_first} using these observations to get
\begin{equation*}
	{\mathcal{A}}(a) = \sum_{b,s}  \gamma(a,b,1,1)\zeta(b,1,s,s)\,.
\end{equation*}
This reduction in complexity suggests that we set $\gamma(a,b):=\gamma(a,b,1,1)$ to finally arrive at
\begin{equation*}
	{\mathcal{A}}(a) = \sum_{b,s}  \gamma(a,b)\zeta(b,1,s,s).
\end{equation*}

\subsection{The Pentagon Equations}

With the associators simplified as much as possible, and with all but
$\alpha_{m,m,m}$ shown to be real-valued functions, now the pentagon equations themselves must be
analyzed. Using the rightmost factor convention (see Note \ref{Note:RightmostFactor}), we go through each of the 16 pentagons as was done in \cite{TAMBARA1998692}.  This results in
the following equations

\begin{align}
\delta\alpha&=1,\label{Eq:Quat1}\\
\delta\alpha_3&=\alpha^{-1},\label{Eq:Quat2}\\
\delta\alpha_1&=\alpha,\label{Eq:Quat3}\\
\alpha_2(a,bc)&=\alpha_2(a,c)\alpha_2(a,b),\label{Eq:Quat4}\\
\alpha_2(ab,c)&=\alpha_2(b,c)\alpha_2(a,c),\label{Eq:Quat5}\\
\alpha(a,b,b^{-1}a^{-1}c)\beta_1(ab,c)&=\beta_1(b,a^{-1}c)\beta_1(a,c)\alpha_3(a,b),\label{Eq:Quat6}\\
\beta_3(ab,c)\alpha(cb^{-1}a^{-1},a,b)&=\alpha_1(a,b)\beta_3(b,c)\beta_3(a,cb^{-1}),\label{Eq:Quat7}\\
\beta_2(b,c)&=\beta_2(b,a^{-1}c)\alpha_2(a,b),\label{Eq:Quat8}\\
\beta_2(a,c)&=\alpha_2(a,b)\beta_2(a,cb^{-1}),\label{Eq:Quat9}\\
\beta_1(a,c)\beta_3(b,c)&=\beta_3(b,a^{-1}c)\alpha(a,a^{-1}cb^{-1},b)\beta_1(a,cb^{-1}),\label{Eq:Quat10}\\
\beta_2(a,c)\beta_2(b,c)&=\alpha_3(a,b)\beta_2(ab,c)\alpha_1(a,b),\label{Eq:Quat11}\\
\alpha_2(a,c)\gamma(c,b) &= \beta_1(a,b)\alpha_3(a,a^{-1}b)\gamma(c,a^{-1}b),\label{Eq:Quat12}\\
\alpha_2(b,a)\gamma(c,b) &= \beta_3(a,c)\alpha_1(ca^{-1},a)\gamma(ca^{-1},b),\label{Eq:Quat13}\\
\alpha_1(a,c)\gamma(c,b) &= \beta_2(a,b)\beta_1(a,ac)\gamma(ca,b),\label{Eq:Quat14}\\
\alpha_3(b,a)\gamma(c,b) &= \beta_2(a,c)\beta_3(a,ba)\gamma(c,ba),\label{Eq:Quat15}\\
\delta_{d,ba^{-1}}\beta_3(a,b)\beta_1(ba^{-1},b)&=4\sum_{c}\beta_2(c,b)\gamma(c,d)\gamma(a,c).\label{Eq:Quat16}
\end{align}

After some close inspection, it is clear that these are the same 16 equations
that were obtained 
for the pentagons in  \cite[page 699]{TAMBARA1998692}, with the
exception of the last pentagon having a four on the right-hand side.

\subsection{Rescaling}\label{sub:QuatRescale}

Because all of the associator coefficients, except $\alpha_{m,m,m}$, are real-valued, the algebraic manipulations by which Tambara and Yamagami derived all of the associators information in \cite{TAMBARA1998692} also works in this case.
However, there is one small change. 
 The equation
below appearing in \cite{TAMBARA1998692} 
\[\gamma(1,1)^2\sum_{c\in A} \alpha_2(a,bd^{-1}a^{-1}) =\delta_{b,ad},
\]
needs to have a four on the left-hand side in the quaternionic case
$$4\gamma(1,1)^2\sum_{c\in A} \alpha_2(a,bd^{-1}a^{-1}) =\delta_{b,ad}.$$
After normalization, the associators become 
\[\alpha \equiv 1\,,\quad \alpha_1 = \alpha_3 = \beta_1 = \beta_3 \equiv 1\,,\quad \alpha_2=\beta_2,\quad\gamma(a,b) = \frac{\gamma(1,1)}{\alpha_2(a,b)},
\]
where $\alpha_2$ is a nondegenerate symmetric bicharacter, and 
where $4\gamma(1,1)^2|A| =1$. Therefore all of the associators are uniquely 
determined by $\alpha_2$ and $\gamma(1,1)$.
In conclusion, we have the following result 
\begin{theorem}\label{Thm:TYQuaternionic}
    Let $A$ be a finite group, let $\tau=\sfrac{\pm1}{\sqrt{4|A|}}$, and let $\chi:A\times A\to \mathbb R^\times$ be a nongedegerate symmetric bicharacter on $A$.
    
    A triple of such data gives rise to a non-split Tambara-Yamagami category \newline $\s C_{\bb H}(A,\chi,\tau)$, with $\End(\1)\cong\bb R$ and $\End(m)\cong\bb H$.  Furthermore, all equivalence classes of such categories arise in this way.  Two categories $\s C_{\bb H}(A,\chi,\tau)$ and $\s C_{\bb H}(A',\chi',\tau')$ are equivalent if and only if $\tau=\tau'$ and there exists an isomorphism $f:A\to A'$ such that for all $a,b\in A$,
    \[\chi'\big(f(a),f(b)\big)\;=\;\chi(a,b)\,.\]
\end{theorem}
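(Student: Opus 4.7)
The plan is to assemble the pieces already developed in Subsections 5.1--5.3 into a full classification, following the template of \cite{TAMBARA1998692} but accounting for the factor of $4$ that arose from the quaternionic dimension count. For existence, given a triple $(A,\tau,\chi)$, I would define $\s C_{\bb H}(A,\tau,\chi)$ by declaring simples $A\sqcup\{m\}$ with $\End(\1)\cong\bb R$, $\End(m)\cong\bb H$, and the fusion rules forced by the preceding discussion, and then set the associator coefficients in the preferred basis of Subsection 5.1 to the normalized values
\[
\alpha\equiv 1,\qquad \alpha_1=\alpha_3=\beta_1=\beta_3\equiv 1,\qquad \alpha_2=\beta_2=\chi,\qquad \gamma(a,b)=\tau/\chi(a,b).
\]
The verification of the 16 pentagon equations from Subsection 5.3 is then routine for the first fifteen (they reduce to standard bicharacter identities, exactly as in the split case), while the sixteenth becomes
\[
4\tau^2\sum_{c\in A}\chi(a,bd^{-1}a^{-1})=\delta_{b,ad},
\]
which is the orthogonality relation satisfied by any nondegenerate symmetric bicharacter together with the normalization $\tau^2=\sfrac{1}{4|A|}$.

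For the converse direction, that every non-split Tambara-Yamagami category over $\bb R$ with $\End(m)\cong\bb H$ is equivalent to some $\s C_{\bb H}(A,\tau,\chi)$, the work is essentially already done: Proposition 5.1 provides preferred basis vectors commuting with the quaternion actions; Subsection 5.2 shows that all associator coefficients except $\gamma$ are forced to be real, and that $\gamma$ reduces from a function of $(a,b,s,t)$ to a single real-valued function of $(a,b)$; and the normalization procedure of Subsection 5.3 (identical to \cite{TAMBARA1998692} aside from the factor of $4$) rescales the remaining data into the form above. Nondegeneracy of $\chi=\alpha_2$ and the constraint $4\tau^2|A|=1$ both fall directly out of the last pentagon equation.

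The equivalence criterion requires analyzing a monoidal equivalence $F:\s C_{\bb H}(A,\tau,\chi)\to\s C_{\bb H}(A',\tau',\chi')$. The underlying additive equivalence permutes simples, inducing a bijection $f:A\to A'$ and sending $m\mapsto m'$. The monoidal structure is then captured by the tensorator coefficients $(\theta,\varphi,\psi,\omega)$ introduced at the end of Section 4, subject to the 8 coherence equations. Running through these in the preferred real bases, the equations involving $\theta$ force $f$ to be a group homomorphism; the equations involving $\varphi$ and $\psi$ admit solutions (after gauge fixing) precisely when $\chi'(f(a),f(b))=\chi(a,b)$; and the final equation involving $\omega$ and $\gamma$ reduces to $\tau=\tau'$. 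Conversely, any $f$ matching $\chi$ to $\chi'$ yields a monoidal equivalence by taking the tensorator to have its canonical values.

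The main obstacle I anticipate is managing the gauge freedom in the tensorator data within the quaternionic setting: because our preferred bases are only determined up to a quaternion-valued rescaling (rather than a real one), one must check that the residual gauge freedom does not produce additional invariants beyond $\tau$ and the isomorphism class of $\chi$. I expect that the argument of \cite{TAMBARA1998692} adapts with only cosmetic changes, the factor of $4$ in the pentagon being absorbed cleanly into the normalization and leaving the equivalence analysis structurally identical to the split case.
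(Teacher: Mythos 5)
Your proposal is correct and follows essentially the same route as the paper: explicit associators verified against the sixteen pentagons (with the factor of $4$ absorbed into $\tau^2=\sfrac{1}{4|A|}$), necessity from the preceding normalization analysis, and the equivalence criterion extracted from the eight hexagon equations for the tensorator coefficients. The one obstacle you flag --- quaternionic gauge freedom in the tensorator --- is resolved in the paper exactly as you anticipate, by applying Proposition \ref{Prop:basis} to the simple $(\bb H,\bb H)$-bimodule $\Hom(m',Fm)$ to obtain a preferred intertwiner $y$ with $F(h)=y\circ h\circ y^{-1}$, after which naturality of $J$ forces $\theta,\varphi,\psi,\omega$ to be real-valued and the split-case argument goes through.
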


\begin{proof}
    We must establish necessary and sufficient conditions for the existence of $\mathcal C_{\mathbb H}(A,\chi,\tau)$, and then establish necessary and sufficient conditions for the existence of a monoidal equivalence $(F,J):\mathcal C_{\mathbb H}(A,\chi,\tau)\to\mathcal C_{\mathbb H}(A',\chi',\tau')$.
    The analysis leading up to the theorem establishes the first necessity statement, and so we begin by showing that our conditions are sufficient for the existence of $\mathcal C_{\mathbb H}(A,\chi,\tau)$.

    Since the simple objects, endomorphism algebras, and fusion rules are already prescribed, we only need to write down the associators and prove that they are coherent. In order to write down certain associators, we will use the construction from Proposition \ref{Prop:RealPartWDagger} to assume without loss of generality that the category has projections $[a]:m\otimes m\to a$ and inclusions $[a]^\dagger:a\to m\otimes m$ such that the following equations hold for every $a\in A$, and $h\in\mathbb H$,
    \begin{gather}
        [a](\id_m\otimes h)[a]^\dagger\;=\;\Re(h)\cdot\id_a\,,\text{ and}\label{Eq:IncludeProjectReal}\\
        \id_{m\otimes m}\;=\;\sum_{a\in A, s\in S}(\id_m\otimes\overline{s})[a]^{\dagger}[a](\id_m\otimes s)\,.
    \end{gather}
    
    The associators of $\mathcal C_{\mathbb H}(A,\chi,\tau)$ are given, for $a, b,
	c\in A$, as follows:
	\begin{gather*}
	    \alpha_{a,b,c}=\id_{abc}\,,\\
	    \alpha_{a,b,m}=\alpha_{m,b,c}=\id_{m}\,,\\
	    \alpha_{a,m,c}=\chi(a,c)\cdot\id_{m},\\
	    \alpha_{a,m,m}=\alpha_{m,m,c}=\id_{m\otimes m}\,,\\
	    \alpha_{m,b,m}=\bigoplus_{a\in A}\chi(a,b)\cdot\id_{a^{\oplus4}}\,,\\
	    \alpha_{m,m,m}=\tau\cdot\sum_{\substack{a,b\in A\\s,t\in S}}\chi(a,b)^{-1}\cdot(s\otimes(\id_m\otimes\overline{t}))(\id_m\otimes[a]^\dagger)([b]\otimes\id_m)((\id_m\otimes s)\otimes t).
	\end{gather*}
	In this last equation we have used the fact that $b\otimes m=m=m\otimes a$.  The unit is $\1=\mathbf 1_A$, the identity in $A$, and the unit constraints are identity morphisms.
    
    By plugging in $\alpha_2=\chi=\beta_2$, $\gamma(a,b)=\tau\cdot\chi(a,b)^{-1}$ and all others constant with value $1$, Equations \ref{Eq:Quat1}-\ref{Eq:Quat16} are satisfied.  The fact that $\chi$ is a bicharacter proves that Equations \ref{Eq:Quat4}, \ref{Eq:Quat5}, and \ref{Eq:Quat10} hold.  The fact that $\chi$ is symmetric proves that Equations \ref{Eq:Quat8} and \ref{Eq:Quat9} hold. The nondegeneracy of $\chi$ and the fact that $\tau^2\cdot4|A|=1$ together imply Equation \ref{Eq:Quat16} is true.  All the remaining pentagon equations follow immediately from the definitions.
    
    Next, suppose that there is a monoidal equivalence $$(F,J):\mathcal{C}_{\mathbb H}(A,\chi,\tau)\to\mathcal{C}_{\mathbb H}(A',\chi',\tau')\,.$$  Monoidal equivalences send invertible objects to invertible objects, and so $F$ must act by some group isomorphism $f:A\to A'$.  Since $m'$ is the only quaternionic simple object in $\mathcal{C}_{\mathbb H}(A',\chi',\tau')$, we must have that $m'\cong F(m)$, and hence $\Hom(m',F(m))\neq0$ is a simple $(\mathbb H,\mathbb H)$-bimodule.  For any $v\in\Hom(m',F(m))$ and $h\in\mathbb H$, the bimodule structure is given by
    \[h.v\;=\;F(h)\circ v\,,\qquad\text{ and }\qquad v.h\;=\;v\circ h\,.\]
    Since $\Hom(m',F(m))$ is a simple bimodule, Proposition \ref{Prop:basis} shows that there is some nonzero $y\in\Hom(m',F(m))$ such that $h.y=y.h$.  This formula is equivalent to $F(h)=y\circ h\circ y^{-1}$.  The components of the tensorator have four different types: $J_{a,b}$, $J_{a,m}$, $J_{m,b}$ and $J_{m,m}$, and we can use the isomorphism $y:m'\to F(m)$ to extract them as follows (composition symbols are omitted for space):
    \begin{align}
        F\big([a,b]\big)J_{a,b}&=\theta(a,b)^{-1}\cdot\big[f(a),f(b)\big],\\
        F\big([a,m]\big)J_{a,m}(\id_{f(a)}\otimes y)&=y\varphi(a)^{-1}\big[f(a),m\big],\\
        F\big([m,b]\big)J_{m,b}(y\otimes\id_{f(b)})&=y\psi(b)^{-1}\big[m,f(b)\big],\\
        F\big([a]\big)J_{m,m}(y\otimes y)&=\big[f(a)\big]\big(\id_m\otimes\omega(a)^{-1}\big).
    \end{align}
    The inverses here are simply a convention as explained in Remark \ref{Rem:InversesInTheTensorators}. 

    Just as the naturality of the associator implied that the associator coefficients were real-valued, the naturality of $J$ implies that the tensorator coefficients $\theta, \varphi, \psi$, and $\omega$ are also all real-valued.  The hexagon relations for the tensorators produce the following equations:
    \begin{align}
        1&=\delta\theta\label{QuatEquivalence1}\,,\\
        \theta&=\delta(\psi)\label{QuatEquivalence2}\,,\\
        \chi'\Big(f(a),f(b)\Big)&=\chi(a,b)\label{QuatEquivalence3}\,,\\
        \theta&=\delta(\varphi)\label{QuatEquivalence4}\,,\\
        \varphi(a)\omega(b)&=\omega(a^{-1}b)\theta(a,a^{-1}b)\label{QuatEquivalence5}\,,\\
        \chi'\Big(f(a),f(b)\Big)&=\frac{\varphi(a)}{\psi(a)}\cdot\chi(a,b)\label{QuatEquivalence6}\,,\\
        \theta(ba^{-1},a)\omega(ba^{-1})&=\psi(a)\omega(b)\,,\label{QuatEquivalence7}\\
        \frac{\tau}{\chi(a,b)\varphi(b)\omega(b)}&=\frac{\tau'}{\chi'\big(f(a),f(b)\big)\psi(a)\omega(a)}\label{QuatEquivalence8}\,.
    \end{align}
    Equation \ref{QuatEquivalence1} is implied by Equations \ref{QuatEquivalence2} and \ref{QuatEquivalence4}.  Equations \ref{QuatEquivalence3} and \ref{QuatEquivalence6} imply that $\varphi=\psi$, which makes Equations \ref{QuatEquivalence2} and \ref{QuatEquivalence4} equivalent to one another.  Equation \ref{QuatEquivalence3} can be used to reduce Equation \ref{QuatEquivalence8} to
    \begin{align}
        \frac{\tau}{\varphi(b)\omega(b)}&=\frac{\tau'}{\psi(a)\omega(a)}\,.
    \end{align}
    Since the left-hand side only depends on $b$, and the right-hand side only depends on $a$, this quantity must depend on neither $a$ nor $b$.  Setting $a=b$ we find that $\tau=\tau'$.  Thus the existence of a monoidal equivalence implies the desired relations.

    By removing redundancies, the equations above reduce to the following list:
    \begin{align}
        \theta&=\delta(\psi)\label{QuatEquivalenceReduced1},\\
        \chi'\Big(f(a),f(b)\Big)&=\chi(a,b)\label{QuatEquivalenceReduced2},\\
        \psi&=\varphi\label{QuatEquivalenceReduced3},\\
        \tau&=\tau'\label{QuatEquivalenceReduced4},\\
        \varphi(a)\omega(a)&=\varphi(1)\omega(1)\label{QuatEquivalenceReduced5}\,.
    \end{align}

    Finally, suppose that $\chi'\big(f(a),f(b)\big)=\chi(a,b)$, and $\tau=\tau'$.  We can construct a tensorator $J$ by writing down coefficient functions $\theta, \varphi, \psi$, and $\omega$.  The coherence of $J$ is then equivalent to the validity of Equations \ref{QuatEquivalenceReduced1}-\ref{QuatEquivalenceReduced5}.  By setting all of these functions to be constant with value $1$, the coherence of the resulting $J$ is immediate.  Thus, these relations between $\chi'$ and $\chi$, $\tau$ and $\tau'$ are enough to prove the existence of a monoidal equivalence between the two categories, and the proof is complete.
\end{proof}

\begin{example}\label{Eg:SimpleQuatCat}
The simplest example of such a category is $\mathcal C_{\mathbb H}(\mathbf 1,1,\pm\sfrac{1}{2})$. The simple objects are
${\1}$ and $m$. By construction $\End({\1}) \cong \mathbb{R}$ and
$\End(m) \cong \mathbb{H}$, with the only non trivial fusion rule being $m\otimes m = 4\cdot\1$.
Since there are no non-trivial group automorphisms and no non-trivial
bicharacters for the trivial group, there are only two categories arising from
this group over $\mathbb{R}$, one for $\sfrac12$ and another for $-\sfrac12$. In the
notation, $1$ stands for the trivial bicharacter from the trivial group to
$\mathbb{R}$. There is only one associator which is
non-trivial, $\alpha_{m,m,m}$. Since $\tau = \pm \sfrac{1}{2}$ and $\chi$ is
always trivial, this means that the following equation completely describes the associator:
\begin{gather*}
	[m,1](\id_m \otimes [1])\alpha_{m,m,m} = \sum_{b,s}  \gamma(a,b)\zeta(b,1,s,s) = 
	\pm \frac{1}{\;2\;}\cdot\sum_{s \in \{1,i,j,k\}}\zeta(1,1,s,s)\,.
\end{gather*}
As can be seen above, there are only two possible choices for the only
non-trivial associator $\alpha_{m,m,m}$, and this comes down to a choice of sign.
These categories are not new; $\mathcal C_{\mathbb H}(\mathbf 1,1,\sfrac12)$ and $\mathcal
C_{\mathbb H}(\mathbf 1,1,-\sfrac12)$ were described in \cite{etingofDescentAndForms}, where
they arose as examples of real forms of
$\text{Vec}_\mathbb{R}^\omega(\mathbb{Z}/2\mathbb Z)$ for $\omega = 0$ and $1$,
respectively.  The category $\mathcal C_{\mathbb H}(\mathbf 1,1,\sfrac12)$ has also appeared in \cite{theoSpinStatistics} where it was given a symmetric braiding, referred to as $\mathsf{SuperVect}_{\mathbb H}$, and interpreted as a categorified field extension of $\Vec_{\mathbb R}$.
\end{example}

\begin{example}\label{Eg:NewQuatCat}
    Let $A=\mathbb Z/2\mathbb Z=\langle w\rangle$, and set $\chi(w,w)=-1$.  The object $m$ in $\mathcal C_{\mathbb H}\left(A,\chi,\pm\tfrac{1}{2\sqrt2}\right)$ satisfies $m\otimes m\cong 4\cdot(\1\oplus w)$.
    From this, it follows that $\FPdim(m)=2\sqrt2$.  This implies that the categories $\mathcal C_{\mathbb H}\left(A,\chi,\pm\tfrac{1}{2\sqrt2}\right)$ do not even admit quasi-fiber functors, and thus cannot be realized as $\Rep_{\mathbb R}(H)$ for any quasi-Hopf algebra $H$ over $\mathbb R$.
\end{example}

\begin{remark}
    Theorem \ref{Thm:TYQuaternionic} requires $\chi:A\times A\to\mathbb R^\times$ to be nondegenerate.  The only groups for which this is possible are elementary abelian 2-groups, that is, groups of the form $(\mathbb Z/2\mathbb Z)^n$.  In this sense, Examples \ref{Eg:NewQuatCat} and \ref{Eg:SimpleQuatCat} are generic.
\end{remark}

\begin{proposition}\label{Prop:QuatCatsAreRigid}
    The categories $\mathcal C_{\mathbb H}(A,\chi,\tau)$ are rigid.
\end{proposition}

\begin{proof}
    It will suffice to show that all simple objects have duals.  Since invertible objects are always dualizable, the only object we need to check is $m$.  We choose $[1]:m\otimes m\to \1$ to be the evaluation map, and $\tau^{-1}[1]^\dagger:\1\to m\otimes m$ to be the coevaluation map.  A short computation shows that the morphism $\alpha_{m,m,m}^{-1}$ is given by
    \[\alpha_{m,m,m}^{-1}=\tau\cdot\sum_{\substack{a,b\in A\\s,t\in S}}\chi(a,b)\big((\id_m\otimes\overline{s})\otimes\overline{t}\big)([b]^\dagger\otimes\id_m)(\id_m\otimes[a])\big(\overline{s}\otimes(\id_m\otimes t)\big)\,.\]
    The duality equations follow from these formulas, Equation \ref{Eq:IncludeProjectReal}, and Schur's Lemma (Lemma \ref{SchursLemma}).
\end{proof}

\section{Analysis of the Real-Complex Case\label{Sec:Real/Complex}}

In this section, we will construct the non-split Tambara-Yamagami categories \newline $\mathcal C_{\mathbb C}(G,g,\chi,\tau)$, where $\1$ is real, and $m$ is complex. Each of the spaces
\begin{gather*}
     \Hom(a\otimes m,m)\,, \hspace{1cm} \Hom(m\otimes a,m)\,, \hspace{.3cm}\text{ and }\hspace{.5cm} \Hom(m\otimes m,a)
\end{gather*}
are 1-dimensional complex bimodules.  Every such bimodule is isomorphic to either
the trivial bimodule $\bb C$ or the conjugating bimodule $\overline{\mathbb C}$, in which the left
and right actions differ by conjugation.
\begin{definition}\label{Def:The-a-symbol}
    For an element $a\in G$ and a scalar $\lambda\in \bb C$, define the superscript notation
    \[\lambda^a:=\begin{cases}
        \lambda & \text{ if }\Hom(a\otimes m,m)\cong\bb C\,,\\
        \overline{\lambda}& \text{ if }\Hom(a\otimes m,m)\cong\overline{\mathbb C}\,,\\
    \end{cases}\]
    and also the degree
    \[|a|:=\begin{cases}
        0 & \text{ if }\Hom(a\otimes m,m)\cong\bb C\,,\\
        1 & \text{ if }\Hom(a\otimes m,m)\cong\overline{\mathbb C}.\\
    \end{cases}\]
         We say that $a$ \emph{conjugates} when $|a|=1$. 
\end{definition}

The appearance of conjugating bimodules should not be surprising.  In
\cite{etingofFusionCategoriesHomotopy2009}, Etingof, Nikshych, and Ostrik analyze
Tambara-Yamagami type fusion categories as $\bb Z/2\bb Z$-graded extensions of
pointed categories in the algebraically closed setting, which is split.  Using their language the categories we are considering in this section are still $\mathbb Z/2\mathbb Z$-graded, and hence $\s M:=\mathbb C\text{-}\Vec=\langle
m\rangle$ would necessarily be an invertible bimodule category for the
pointed category $\s C:=\mathbb R\text{-}\Vec^\omega_{G}$.  If none of the $a\in G$ acted
by the conjugation functor, then all objects in the dual category $\s C_{\s
M}^*$ would be complex, and this would imply that $\s M$ wasn't invertible, because invertibility forces $\mathcal C\simeq\mathcal C_{\mathcal M}^*$.

Thus we find that there must be at least one element of $G$ that conjugates.  On the
level of groups, the degree map defined above must be a surjective homomorphism onto $\bb
Z/2\bb Z$.  Let $A$ be the kernel of the degree map, so that we have a short
exact sequence: \[A\hookrightarrow G\twoheadrightarrow\bb Z/2\bb Z\,.\] 

We will uncover more about the structure of $G$ in Lemma \ref{GeneralizedDihedral}, but this will require further information in the form of the pentagon equations.  For now, we point out that $|a|=|a^{-1}|$, for all $a\in G$. This allows us to replace expressions like $\lambda^{a^{-1}}$ with $\lambda^{a}$ in order to avoid nested superscripts.


We will choose arbitrary nonzero morphisms for each $a, b$ and $c\in G$,
\begin{gather*}
     [a,b]\in\Hom(a\otimes b, c)\,,\qquad
     [a,m]\in\Hom(a\otimes m,m)\,,\\
     [m,a]\in\Hom(m\otimes a,m)\,,\quad\text{and}\qquad
     [a]\in\Hom(m\otimes m,a)\,.
\end{gather*}

A priori, there is nothing to indicate which type of bimodule $\Hom(m\otimes m,\1)$ happens to be.  For now, we will keep track of this possible conjugation by the symbol $g$. More explicitly, we have the following rule.
\begin{definition}\label{Def:The-g-Symbol}
    There is an $\bb R$-linear automorphism of $\bb C$, denoted $\lambda\mapsto\lambda^g$, that is uniquely determined by the equation
    \[[1]\circ(\id_m\otimes i)\;=\;[1]\circ(i^g\otimes\id_m)\,.\]
\end{definition}





Once again following the notation of \cite{TAMBARA1998692}, below are the associators of the category for $a,b,c\in G$
\begin{align*}
	\tetr{a}{b}{c}{abc}:& [b,c]\otimes[a,bc] \to \alpha(a,b,c)\cdot[a,b]\otimes [ab,c]\,,\\
	\tetr{a}{b}{m}{m}:&   [b,m]\otimes [a,b] \to [a,b]\otimes\Big([ab,m]\alpha_3(a,b)\Big)\,,\\
	\tetr{a}{m}{b}{m}:&   [m,b]\otimes[a,m]  \to \Big([a,m]\alpha_2(a,b)\Big)\otimes[m,b]\,,\\
	\tetr{m}{a}{b}{m}:&   [a,b]\otimes[m,ab] \to \Big([m,a]\alpha_1(a,b)\Big)\otimes [m,b]\,,\\
	\tetr{a}{m}{m}{b}:&   [a^{-1}b]\otimes [a,a^{-1}b] \to [a,m]\otimes \Big([b]\beta_1(a,b)\Big)\,,\\
	\tetr{m}{a}{m}{b}:&   [a,m]\otimes [b] \to [m,a]\otimes \Big([b]\beta_2(a,b)\Big)\,,\\
	\tetr{m}{m}{a}{b}:&   [m,a]\otimes [b] \to \Big([ba^{-1}]\beta_3(a,b)\Big)\otimes [ba^{-1},a]\,,\\
	\tetr{m}{m}{m}{m}:&   [a]\otimes[m,a]\to \sum_{\substack{b\in A\\s\in\{1,i\}}} [b]s\otimes [b,m]\gamma(a,b)_s\label{Gamma0}\,.
\end{align*}


In this case, only $\alpha$ is real-valued.  The $\alpha_j$'s and $\beta_k$'s are complex-valued, and $\gamma$ takes values in $M_{4|G|}(\bb R)$ (see Remark \ref{Rem:GammaMatrixDim}).  By naturality, $\gamma$ will be determined by vectors $\gamma(a,b)$ in the algebra $\bb C\otimes_{\bb R}\bb C$ for each pair $a,b\in A$.  We use a notation that keeps track of the complex factor on the right by using the following decomposition
\[\gamma(a,b)\;=\;1\otimes\gamma(a,b)_1+i\otimes\gamma(a,b)_i\,.\]
Any element of $\bb C\otimes_{\bb R}\bb C$ can be given such a decomposition.

With these conventions in place, we can derive the following pentagon equations by applying the rightmost factor convention, see Note \ref{Note:RightmostFactor}.

\begin{align}
\delta\alpha&=1\label{Complex1}\,,\\
\delta^R\alpha_3&=\alpha^{-1}\label{Complex2}\,,\\
\delta^L\alpha_1&=\alpha\label{Complex3}\,,\\
\alpha_2(a,bc)\alpha_1(b,c)^a&=\alpha_1(b,c)\alpha_2(a,c)^b\alpha_2(a,b)\label{Complex4}\,,\\
\alpha_3(a,b)^c\alpha_2(ab,c)&=\alpha_2(b,c)\alpha_2(a,c)^b\alpha_3(a,b)\label{Complex5}\,,\\
\alpha(a,b,b^{-1}a^{-1}c)\beta_1(ab,c)&=\beta_1(b,a^{-1}c)\beta_1(a,c)\alpha_3(a,b)^{gabc}\label{Complex6}\,,\\
\beta_3(ab,c)\alpha(cb^{-1}a^{-1},a,b)&=\alpha_1(a,b)\beta_3(b,c)^a\beta_3(a,cb^{-1})\label{Complex7}\,,\\
\beta_2(a,c)^b\beta_3(b,c)&=\alpha_2(a,b)\beta_3(b,c)^a\beta_2(a,cb^{-1})\label{Complex8}\,,\\
\beta_1(a,c)^b\beta_2(b,c)&=\beta_2(b,a^{-1}c)\beta_1(a,c)\alpha_2(a,b)^{gabc}\label{Complex9}\,,\\
\beta_1(a,c)^b\beta_3(b,c)&=\beta_3(b,a^{-1}c)\alpha(a,a^{-1}cb^{-1},b)\beta_1(a,cb^{-1})\label{Complex10}\,,\\
\beta_2(a,c)^b\beta_2(b,c)&=\alpha_3(a,b)\beta_2(ab,c)\alpha_1(a,b)^{gabc}\label{Complex11}\,,\\
\sum_{s}s\cdot\alpha_2(a,c)^{gab}\otimes\gamma(c,b)_s&=\sum_tt\cdot\beta_1(a,b)\otimes\alpha_3(a,a^{-1}b)\cdot\gamma(c,a^{-1}b)_t\label{Complex12}\,,\\
\sum_ss\otimes\alpha_2(b,a)\cdot\gamma(c,b)_s^a&=\sum_tt\cdot\alpha_1(ca^{-1},a)^{gb}\otimes\beta_3(a,c)\cdot\gamma(ca^{-1},b)_t\label{Complex13}\,,\\
\sum_ss\cdot\alpha_1(a,c)^{gab}\otimes\gamma(c,b)_s&=\sum_tt^a\cdot\beta_2(a,b)\otimes\beta_1(a,ac)\cdot\gamma(ac,b)_t\label{Complex14}\,,\\
\sum_ss\otimes\alpha_3(b,a)\cdot\gamma(c,b)_s^a&=\sum_tt^a\cdot\beta_3(a,ba)\otimes\gamma(c,ba)_t\cdot\beta_2(a,c)\label{Complex15},\\
\delta_{d,ba^{-1}}\beta_3(a,b)\otimes\beta_1(ba^{-1},b)&=\sum_{c,s,t}t\otimes s^{gbd}\cdot\beta_2(c,b)\cdot\gamma(c,d)^{gbd}_t\cdot\gamma(a,c)_s\;.\label{Complex16}
\end{align}

Any change in the basis vectors can be achieved by a transformation of the following form
\begin{align*}
    [a,b]'&=\theta(a,b)[a,b]\,,\\
    [a,m]'&=\varphi(a)[a,m]\,,\\
    [m,a]'&=\psi(a)[m,a]\,,\\
    [a]'&=\omega(a)[a]\,,
\end{align*}
where $\varphi$, $\psi$ and $\omega$ are complex-valued functions, and $\theta$ is real-valued. Under such a change of basis, the associator coefficients change in the following way
\begin{align}
    \alpha'&=\delta\theta\cdot\alpha\label{Basis1}\,,\\
    \alpha_1'(a,b)&=\frac{\psi(ab)\theta(a,b)}{\psi(a)\psi(b)^a}\cdot\alpha_1(a,b)\label{Basis2}\,,\\
    \alpha_2'(a,b)&=\frac{\psi(b)\varphi(a)^b}{\psi(b)^a\varphi(a)}\cdot\alpha_2(a,b)\label{Basis3}\,,\\
    \alpha_3'(a,b)&=\frac{\varphi(b)\varphi(a)^b}{\varphi(ab)\theta(a,b)}\cdot\alpha_3(a,b)\label{Basis4}\,,\\
    \beta_1'(a,b)&=\frac{\omega(a^{-1}b)\theta(a,a^{-1}b)}{\varphi(a)^{gab}\omega(b)}\cdot\beta_1(a,b)\label{Basis5}\,,\\
    \beta_2'(a,b)&=\frac{\omega(b)^a\varphi(a)}{\omega(b)\psi(a)^{gab}}\cdot\beta_2(a,b)\label{Basis6}\,,\\
    \beta_3'(a,b)&=\frac{\psi(a)\omega(b)^a}{\theta(ba^{-1},a)\omega(ba^{-1})}\cdot\beta_3(a,b)\label{Basis7}.
\end{align}

The function $\gamma$ also changes, but we will not need to consider this until the proof of Theorem \ref{Thm:TYComplex}.  Unlike the split and quaternionic cases, Equation \ref{Basis1} does not allow us to trivialize $\alpha$.  We would set $\theta=\alpha_1^{-1}$ in those cases but in the current case $\alpha_1$ may take on complex values, while $\theta$ is real valued. Despite this, Equation \ref{Complex3} implies that $|\alpha|=\delta|\alpha_1|$.  Thus, by setting
\[\theta(a,b):=\frac{1}{|\alpha_1(a,b)|}\,,\]
we may assume that $|\alpha|\equiv|\alpha_1|\equiv1$.  Notice that we are still able to use $\{\pm1\}$-valued $\theta$'s to adjust the sign of $\alpha$ without altering the magnitudes.

Observe that Equation \ref{Complex4} can be rearranged to look like
\begin{align*}
    \frac{\alpha_2(a,bc)}{\alpha_2(a,c)^b\alpha_2(a,b)}\;&=\;\frac{\alpha_1(b,c)}{\alpha_1(b,c)^a}\,.
\end{align*}
We fix an $a$ that conjugates and define $f(x):=\alpha_2(a,x)^{-1}$.  Under these conditions, the above equation becomes $\delta^L f\;=\;\alpha_1^2$.
We now choose a function $\psi:G\to\bb C^\times$ that satisfies $\psi^2=f$.  It follows that
\begin{align*}
    \theta(x,y):=\frac{(\delta^L\psi)(x,y)}{\alpha_1(x,y)}\in\{\pm1\}.
\end{align*}
With this choice of $\psi$ and $\theta$, Equation \ref{Basis2} shows that we may assume $\alpha_1\equiv1$. It follows from Equation \ref{Complex3} that this also forces $\alpha\equiv1$.

By the triangle axiom, we may assume the following normalization conditions.  For each of $\alpha_3$, $\beta_1$, and $\beta_3$, either input being $1$ implies the output is $1$.  For $\beta_2$, we have $\beta_2(1,-)\equiv1$.  Notice that the triangle axiom gives no information about $\beta_2(-,1)$.

We further normalize by setting
\[\varphi(a):=\frac{\omega(1)\psi(a)^{ga}}{\omega(1)^a\beta_2(a,1)}\,.\]
By Equation \ref{Basis6}, this normalization allows us to assume  $\beta_2(-,1)\equiv1$. Thus we have shown that any Tambara-Yamagami category of this form has a basis for the hom spaces for which the coefficients of the associator are normalized. From now on, without loss of generality, we assume all coefficients to be normalized.

By normalization, setting $a=1$ or $b=1$ in Equations \ref{Basis1}-\ref{Basis7} shows that only certain basis changes are allowed.  The new restrictions are:
\begin{align}
    \begin{split}
    \psi(1)\;&=\;\theta(a,1)\;=\;\theta(1,b)\,,\\
    \varphi(a)&=\frac{\psi(a)^{ga}\omega(1)}{\omega(1)^a}\,,\\
    \varphi(a)^{ga}\omega(1)&=\omega(a^{-1})\theta(a,a^{-1})\,,\\
    \theta(a^{-1},a)\omega(a^{-1})&=\psi(a)\omega(1)^a\,.
    \end{split}\label{NormCoBTRules}
\end{align}
This shows that the quadruple $(\theta,\psi,\varphi,\omega)$ is completely determined by the triple $\big(\theta,\psi,\omega(1)\big)$.  The above equations imply that
\[\frac{\omega(1)^a}{\omega(1)}\;=\;\left(\frac{\omega(1)^a}{\omega(1)}\right)^g,\hspace{6mm}\text{ for any }a\in G\,.\]
Notice that when $g$ conjugates, this forces $\omega(1)^4=1$.

When comparing categories with the same fusion rules, if their associator coefficients differ by a change of bases then they must be equivalent.  When two categories are equivalent, they are often equivalent in multiple different ways. This is inconvenient for classification since it means there are many variables to check. Luckily, there is a way to normalize the equivalences by composing them with autoequivalences. In this setting, an autoequivalence means any change of bases that does not alter any of the associator coefficients.

Notice that the transformation resulting from using constant scalar factors \newline $\big(\theta,\psi,\omega(1)\big)=(r_1,r_1,r_2)$, for $r_1, r_2\in\bb R^\times$, sends all associator coefficients to themselves.  In other words, this corresponds to an autoequivalence of the category. By composing an equivalence $\big(\theta,\psi,\omega(1)\big)$ with an autoequivalence $\big(\psi(1)^{-1},\psi(1)^{-1},$\newline $|\omega(1)|^{-1}\big)$, we may assume that $\theta(a,1)=\theta(1,b)=\psi(1)=1$, and that $|\omega(1)|=1$. If $\lambda^4=1$, then the transformation $\big(\theta,\psi,\omega(1)\big)=(1,1,\lambda)$ also sends all the coefficients to themselves.  Thus we can freely replace $\omega(1)$ with $i^k\cdot\omega(1)$ by composing with an autoequivalence of the form $(1,1,i^k)$.  When $g$ conjugates, this means that we may assume $\omega(1)=1$, but when $g$ doesn't conjugate, we cannot make this assumption. 

\hspace{2mm}



With our normalization assumptions in place, we can continue to determine the dependencies between the coefficients.
To start, we can set $c=1$ in Equation \ref{Complex11} to find that $\alpha_3=1$.  Next, we set $c=1$ in Equation \ref{Complex6} to find that $\beta_1=1$.  By setting $c=1$ in Equation \ref{Complex7}, we see that $\beta_3\equiv1$.

Let us turn our attention now to $\alpha_2$ and $\beta_2$.  By setting $c=1$ in Equation \ref{Complex8}, we get that
\begin{align}
    \beta_2(a,b)=\alpha_2(a,b^{-1})^{-1}\label{Beta2}\,.
\end{align}
By setting $c=1$ in Equation \ref{Complex9}, and then applying Equation \ref{Beta2}, we obtain the following symmetry condition for $\alpha_2$
\begin{align}
    \alpha_2(b,a)=\alpha_2(a,b)^{gab}\label{Symmetry}\,.
\end{align}

Equation \ref{Complex4} is a cocycle condition, which can be rearranged to express a kind of twisted multiplicativity statement
\begin{align}
\alpha_2(a,bc)&=\alpha_2(a,b)\alpha_2(a,c)^b\,.\label{Multiplicativity}
\end{align}
In terms of cohomology, this kind of multiplicativity twisted by a group action is called a 2-cocycle condition.

\begin{remark}
    When restricted to $A\times A$, Equations \ref{Symmetry} and \ref{Multiplicativity} show that $\alpha_2$ is a complex-valued bicharacter.  If $g$ conjugates, then $\alpha_2$ is conjugate-symmetric (one might call it hermitian), and if $g$ doesn't conjugate, then $\alpha_2$ is symmetric.
\end{remark}

\begin{remark} Here is an interpretation of this multiplicativity.  There is a real algebra $\bb C_\sim G$ defined similarly to the standard group algebra $\bb CG$, except that group elements only commute with scalars up to conjugation:
    \[b\cdot\lambda\;=\;\lambda^b\cdot b.\]
    Equation \ref{Multiplicativity} can be understood as saying that for each $a\in G$, the map
    \[\alpha_2(a,-):\bb C_\sim G\to\End_{\bb R}(\bb C)\]
    endows $\bb C$ with the structure of a representation.
\end{remark}

Equation \ref{Complex5} is similar to Equation \ref{Complex4}, and the corresponding multiplicativity statement is a necessary consequence of Equations \ref{Symmetry} and \ref{Multiplicativity}.

Let us now turn our attention to the problem of simplifying the function $\gamma$.

\begin{lemma}\label{gamma(1,1)1vsiLemma} The entry $\gamma (1,1)$ satisfies the relation $\gamma(1,1)_i\;=\;\overline{i}^g\cdot\gamma(1,1)_1$.
\end{lemma}
\begin{proof}
    There are two summands of $m\otimes m$ that correspond to $\1$, the component corresponding to $1$ and the component corresponding to $i$.  By composing with the adjoint $[1]^\dagger$, we can isolate the coefficient $\gamma(1,1)_i$ as follows
    \begin{align*}
        \gamma(1,1)_i&=(\id_m\otimes[1])\circ\alpha_{m,m,m}\circ(\id_m\otimes(\overline{i})\otimes\id_m)\circ([1]^\dagger\otimes\id_m)\\
        &=(\overline{i}^g)\circ(\id_m\otimes[1])\circ\alpha_{m,m,m}\circ([1]^\dagger\otimes\id_m)\\
        &=\overline{i}^g\gamma(1,1)_1\,.
    \end{align*}
\end{proof}

In Equation \ref{Complex14}, by setting $b=c=1$, we find that
\begin{align}
    \sum_ss\otimes\gamma(1,1)_s&=\sum_tt^a\otimes\cdot\gamma(a,1)_t\,,\nonumber
\end{align}
which implies that
\begin{gather}
    \gamma(a,1)_s\;=\;(s^2)^{|a|}\gamma(1,1)_s
    \;=\;\overline{s}^{ga}\gamma(1,1)_1\label{GammaLeft}\,.
\end{gather}
Here we have used Lemma \ref{gamma(1,1)1vsiLemma} in the last line.

Next, we set $a=b^{-1}$ in Equation \ref{Complex15} to find
\begin{align*}
    \sum_ss\otimes\gamma(c,b)_s^a&=\sum_tt^a\otimes\gamma(c,1)_t\beta_2(b^{-1},c)\,,\\
\end{align*}
which implies that $\gamma(c,b)_s^b=(s^2)^{|b|}\gamma(c,1)_s\beta_2(b^{-1},c)$.
We replace $c$ with $a$ in the above equation, and then we use Equation \ref{GammaLeft} to find
\begin{gather*}
    \gamma(a,b)^b_s\;=\;(s^2)^{|b|}\gamma(a,1)_s\beta_2(b^{-1},a)\;=\;\overline{s}^{gab}\gamma(1,1)_1\beta_2(b^{-1},a)\,.
\end{gather*}


We replace the $\beta_2$ terms by using Equation \ref{Beta2} to get
\begin{align}
    \gamma(a,b)^b_s&=\frac{\overline{s}^{gab}\gamma(1,1)_1}{\alpha_2(b^{-1},a^{-1})}\,.\nonumber
\end{align}
By the multiplicativity, normality, and symmetry properties of $\alpha_2$, the above formula simplifies to the following final form:
\begin{align}
    \gamma(a,b)_s&=\frac{\overline{s}^{ga}\gamma(1,1)^b_1}{\alpha_2(a,b)^{gb}}\label{GammaFinal}\,.
\end{align}

Observe that Equation \ref{GammaFinal} contains the factor $\overline{s}^{ga}$.  This fact allows us to greatly reduce the complexity of Equations \ref{Complex12}-\ref{Complex15} by using a little algebra.
\begin{lemma}\label{Lem:ComplexReductionLemma}
      Let $u$ and $v$ represent words in the set $G\cup\{g\}$.  Consider the element
      \[P_{u,v}:=\tfrac12\left(1\otimes 1+i^u\otimes\overline{i}^v\right)\in\bb C\otimes_{\bb R}\bb C\,.\]
      This element is an idempotent and it satisfies that $P_{u,v}\cdot(\lambda\otimes1)=P_{u,v}\cdot(1\otimes\lambda^{uv})$. 
\end{lemma}
By identifying which projection $P_{u,v}$ appears on each side of Equations \ref{Complex12}-\ref{Complex15}, we use Lemma \ref{Lem:ComplexReductionLemma} to pass all the complex scalars across the tensor symbol via the conjugation $(-)^{uv}$.  In this way, we reduce Equations \ref{Complex12}-\ref{Complex15} to the following much simpler equations
\begin{align}
    \alpha_2(a,c)^{abc}\gamma(c,b)_1&=\gamma(c,a^{-1}b)_1\,,\label{12Reduced}\\
    \alpha_2(b,a)\gamma(c,b)_1^a&=\gamma(ca^{-1},b)_1\,,\label{13Reduced}\\
    \gamma(c,b)_1&=\beta_2(a,b)^{gc}\gamma(ac,b)_1\,,\label{14Reduced}\\
    \gamma(c,b)_1^a&=\gamma(c,ba)_1\beta_2(a,c)\,.\label{15Reduced}
\end{align}

Now, having this simplification, we set $b=1$ in Equation \ref{12Reduced} and continue to reduce
\begin{gather}
    \alpha_2(a,c)^{ac}\gamma(c,1)_1=\gamma(c,a^{-1})_1\nonumber\,,\\
    \alpha_2(a,c)^{ac}\gamma(1,1)_1=\frac{\gamma(1,1)_1^a}{\alpha_2(c,a^{-1})^{ga}}\nonumber\,,\\
    \alpha_2(c,a)^g\alpha_2(c,a^{-1})^{ga}=\frac{\gamma(1,1)_1^a}{\gamma(1,1)_1}\nonumber\,,\\
    \alpha_2(c,1)^{ga}=\frac{\gamma(1,1)_1^a}{\gamma(1,1)_1}\nonumber\,,\\
    1=\frac{\gamma(1,1)_1^a}{\gamma(1,1)_1}\,.\label{gamma111Real}
\end{gather}
Since Equation \ref{gamma111Real} holds for all $a\in G$, it follows that $\gamma(1,1)_1$ must be a real number.

We will come back to Equation \ref{gamma111Real} later in Equation \ref{OnlyTwoGammas}, but for now, let us examine Equation \ref{Complex16}. Setting $d=b=1$ and $a\neq1$, we use Lemma \ref{Lem:ComplexReductionLemma} to begin reducing Equation \ref{Complex16}
\begin{align}
    0&=\sum_{c,s,t}t\otimes s^{g}\cdot\gamma(c,1)^{g}_t\cdot\gamma(a,c)_s\nonumber\\
    0&=\sum_{c,s,t}t\otimes s^g\cdot\overline{t}^{c}\gamma(c,1)^{g}_1\cdot\overline{s}^{ga}\gamma(a,c)_1\nonumber\\
    0&=2\sum_{c,s}P_{1,c}\cdot\left(1\otimes\gamma(c,1)^{g}_1\cdot(s^2)^{|a|}\gamma(a,c)_1\right)\nonumber\\
    0&=\sum_{c,s}P_{1,c}\cdot\left(1\otimes\gamma(c,1)^{g}_1\cdot(s^2)^{|a|}\gamma(a,c)_1\right)\,.\label{NondegStep1}
\end{align}
Equation \ref{NondegStep1} is uninteresting when $|a|=1$, so suppose $|a|=0$ ($a\in A$).  With this assumption in hand, we continue to reduce Equation \ref{NondegStep1} 
\begin{align}
    0&=2\cdot\sum_{c}P_{1,c}\cdot\left(1\otimes\gamma(c,1)^{g}_1\gamma(a,c)_1\right)\nonumber\\
    0&=2\cdot\sum_{c}P_{1,c}\cdot\left(1\otimes\gamma(1,1)_1^{g}\left(\frac{\gamma(1,1)^c_1}{\alpha_2(a,c)^{gc}}\right)\right)\nonumber\\
    0&=2\cdot\sum_{c}P_{1,c}\cdot\left(1\otimes\frac{\gamma(1,1)^c_1}{\alpha_2(a,c)^{gc}}\right)\nonumber\\
    0&=\sum_{|c|=0}(1\otimes 1-i\otimes i)\cdot\left(1\otimes\frac{\gamma(1,1)^c_1}{\alpha_2(a,c)^{gc}}\right)\nonumber\\
    &\hspace{15mm}+\;\sum_{|c|=1}(1\otimes1+i\otimes i)\cdot\left(1\otimes\frac{\gamma(1,1)_1^c}{\alpha_2(a,c)^{gc}}\right)\nonumber\,.
\end{align}
For ease of notation, we let the tensor factor that depends on the index $c$ be denoted by $S(c)$, so that the above equation becomes
\begin{gather*}
    0=\sum_{|c|=0}(1\otimes 1-i\otimes i)\cdot\left(1\otimes S(c)\right)\;+\;\sum_{|c|=1}(1\otimes1+i\otimes i)\cdot\left(1\otimes S(c)\right)\,.
\end{gather*}
By taking the real and imaginary parts of the left tensor factor, we find that
\begin{gather}
    0=\sum_{|c|=0}S(c)\;+\;\sum_{|c|=1}S(c)\;,\hspace{2mm}\text{and}\hspace{5mm}
    0=-\sum_{|c|=0}iS(c)\;+\;\sum_{|c|=1}iS(c)\;.\nonumber
\end{gather}
This is only possible if both summations are zero.  Focusing on the $|c|=0$ sum, we continue reducing the equation
\begin{align}
    0&=\sum_{|c|=0}\frac{\gamma(1,1)^c_1}{\alpha_2(a,c)^{gc}}\nonumber\\
    0&=\sum_{|c|=0}\frac{\gamma(1,1)_1}{\alpha_2(a,c)^{g}}\nonumber\\
    0&=\sum_{|c|=0}\frac{1}{\alpha_2(a,c)}\nonumber\\
    0&=\sum_{|c|=0}\alpha_2(a,c)\,.\label{NondegStep3}
\end{align}
Since Equation \ref{NondegStep3} holds for any $a\neq 1\in A$, we conclude that $\alpha_2$ is nondegenerate when restricted to $A\times A$.  This implies in particular that $A$ must be abelian, but it provides even more.

\begin{lemma}\label{GeneralizedDihedral}
    The exact sequence $A\hookrightarrow G\twoheadrightarrow\bb Z/2\bb Z$ is necessarily split, and $\bb Z/2\bb Z$ acts on $A$ by inversion.
\end{lemma}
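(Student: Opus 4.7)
The plan is to fix any $w\in G$ with $|w|=1$ (which exists by surjectivity of the degree map) and show two things: (a) conjugation by $w$ acts on $A$ by inversion, and (b) $w^2=1$. Together these make $\{1,w\}$ a subgroup isomorphic to $\bb Z/2\bb Z$ that maps isomorphically to the quotient, providing the desired splitting with the prescribed action. Both (a) and (b) will follow by evaluating $\alpha_2$ on cleverly chosen inputs and invoking the nondegeneracy of $\alpha_2|_{A\times A}$ that was just established.

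For (a), I would compute $\alpha_2(wa,b)$ for arbitrary $a,b\in A$ in two different ways using first-variable multiplicativity (Equation \ref{Complex5} with $\alpha_3\equiv 1$, i.e. $\alpha_2(xy,z)=\alpha_2(y,z)\alpha_2(x,z)^y$). Direct application gives $\alpha_2(wa,b)=\alpha_2(a,b)\alpha_2(w,b)^a=\alpha_2(a,b)\alpha_2(w,b)$, since $|a|=0$. Writing instead $wa=(waw^{-1})w$ produces $\alpha_2(wa,b)=\alpha_2(w,b)\overline{\alpha_2(waw^{-1},b)}$, where the conjugation arises because $|w|=1$. Equating and cancelling $\alpha_2(w,b)\neq 0$ yields $\alpha_2(waw^{-1},b)=\overline{\alpha_2(a,b)}$. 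Since $A$ is finite, every value of $\alpha_2|_{A\times A}$ is a root of unity, so $\overline{\alpha_2(a,b)}=\alpha_2(a,b)^{-1}=\alpha_2(a^{-1},b)$. Nondegeneracy in the first variable then forces $waw^{-1}=a^{-1}$.

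For (b), set $t:=w^2\in A$. Applying (a) to $t$ gives $wtw^{-1}=t^{-1}$, but also $wtw^{-1}=ww^2w^{-1}=w^2=t$, so $t^2=1$. Next, first-variable multiplicativity together with $|w|=1$ gives $\alpha_2(w^2,a)=\alpha_2(w,a)\alpha_2(w,a)^w=|\alpha_2(w,a)|^2$, a nonnegative real number. On the other hand, since $t\in A$ and $t^2=1$, multiplicativity forces $\alpha_2(t,a)^2=\alpha_2(t^2,a)=1$, so $\alpha_2(t,a)\in\{\pm 1\}$. The two constraints together pin down $\alpha_2(t,a)=1$ for every $a\in A$, and nondegeneracy of $\alpha_2|_{A\times A}$ yields $t=1$. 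Thus $w^2=1$, and the subgroup generated by $w$ gives the desired splitting whose action on $A$ is inversion by (a).

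The delicate step is the sign-resolution in (b): we genuinely need both that $\alpha_2(w^2,a)$ is a nonnegative real (obtained from the twisted multiplicativity peculiar to $|w|=1$) and that it squares to $1$ (obtained from $w^2$ being a $2$-torsion element of $A$), in order to rule out the value $-1$. Without the positivity half, the argument would only force $t$ to land in the $2$-torsion subgroup of $A$, leaving open a nonsplit extension in which a representative of the nontrivial coset has order $4$.
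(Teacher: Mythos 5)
Your proof is correct, and it runs on the same engine as the paper's: the twisted multiplicativity of $\alpha_2$ at a conjugating element $w$, combined with nondegeneracy of $\alpha_2\mid_{A\times A}$. The differences are organizational but genuine. The paper uses second-variable multiplicativity (Equation \ref{Multiplicativity}), proves $w^2=1$ first, and then needs $w^2=1$ inside its inversion computation (it rewrites $\alpha_2(b,wa)$ as $\alpha_2(b,waww)$); you use the first-variable version (Equation \ref{Complex5} with $\alpha_3\equiv1$), prove the inversion formula $waw^{-1}=a^{-1}$ for an arbitrary conjugating $w$ without assuming $w^2=1$ (via the factorization $wa=(waw^{-1})w$), and only then extract $w^2=1$. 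Your sign resolution is also more explicit than the paper's: the paper asserts $\alpha_2(a,w^2)=|\alpha_2(a,w)|^2=1$ outright, which tacitly uses that the bicharacter $\alpha_2\mid_{A\times A}$ is root-of-unity valued, whereas you rule out the value $-1$ by combining positivity with the $2$-torsion of $w^2$ obtained from the inversion step, so your write-up fills in a small step the paper elides. One bookkeeping point in your favor: the nondegeneracy actually established from Equation \ref{NondegStep3} is injectivity of $a\mapsto\alpha_2(a,-)\mid_A$, which is precisely the form your argument invokes; the paper's own proof uses nondegeneracy in the other variable, which is equivalent via the symmetry relation \ref{Symmetry} (or finiteness of $A$), but your route avoids even having to remark on this.
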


\begin{proof}
    Let $|w|=1$, and note that $w^2\in A$.  For all $a\in A$, Equation \ref{Multiplicativity} implies
    \[\alpha_2(a,w^2)\;=\;\alpha_2(a,w)\overline{\alpha_2(a,w)}\;=\;|\alpha_2(a,w)|^2\;=\;1\,.\]
    Thus by nondegeneracy of $\alpha_2$, $w^2=1\in G$.  We may use $1\mapsto w$ as the desired splitting.
    
    Conjugation by $w$ produces an automorphism $a\mapsto waw$ of $A$.  For any $b\in A$, we can use Equation \ref{Multiplicativity} again to find
    \begin{gather*}
        \alpha_2(b,w)\overline{\alpha_2(b,a)}\;=\;\alpha_2(b,wa)\;=\;\alpha_2(b,waww)\;=\;\alpha_2\big(b,waw\big)\alpha_2(b,w)\,,
    \end{gather*}
    which implies that $\alpha_2(b,a^{-1})\;=\;\alpha_2(b,a)^{-1}\;=\;\overline{\alpha_2(b,a)}\;=\;\alpha_2\big(b,waw\big)$.
    Finally, nondegeneracy implies that $waw=a^{-1}$.
\end{proof}

\begin{note}
    The groups $G$ described in Lemma \ref{GeneralizedDihedral} are known as generalized dihedral groups.
\end{note}

Earlier we observed that Equation \ref{gamma111Real} implies that $\gamma(1,1)_1\in\bb R$.  However not all real numbers will work because Equation \ref{Complex16} enforces a further restriction.  By setting $b=1$ and $d=a^{-1}$ in Equation \ref{Complex16}, we find that

\begin{align}
    1\otimes1&=\sum_{c,s,t}t\otimes s^{ga}\gamma(c,a^{-1})^{ga}_t\gamma(a,c)_s\nonumber\\
    &=\sum_{c,s}2P_{1,ac}\left(1\otimes\gamma(c,a^{-1})^{ga}_1\gamma(a,c)_1\right)\nonumber\\
    &=2\sum_{c}2P_{1,ac}\left(1\otimes\gamma(c,a^{-1})^{ga}_1\gamma(a,c)_1\right)\,.\nonumber
\end{align}

By taking the real part of the left tensor factor, this becomes
    
\begin{align*}
    1&=2\sum_{c}\gamma(c,a^{-1})^{ga}_1\gamma(a,c)_1\\
    &=2\sum_{c}\left(\frac{\gamma(1,1)^{a}_1}{\alpha_2(c,a^{-1})^{ga}}\right)^{ga}\left(\frac{\gamma(1,1)^c_1}{\alpha_2(a,c)^{gc}}\right)\\
    &=2\gamma(1,1)^2_1\sum_{c}\frac{1}{\alpha_2(c,1)}\\
    &=2\gamma(1,1)^2_1|G|\,.
\end{align*}
By solving for $\gamma(1,1)_1$, we find that
\begin{align}
    \gamma(1,1)_1&=\pm\frac{1}{\sqrt{2|G|}}\,.\label{OnlyTwoGammas}
\end{align}
This requirement for $\gamma(1,1)_1$ is analogous to the previous requirements that appear in the classification for the non-split real and quaternionic Tambara-Yamagami categories.

By combining Equations \ref{Multiplicativity} and \ref{Symmetry}, it follows that

\begin{gather}
    \alpha_2(a,b^{-1})\;=\;\frac{1}{\alpha_2(a,b)^b}\;\;\text{and}\;\;\alpha_2(a^{-1},b)\;=\;\frac{1}{\alpha_2(a,b)^a}\,.\label{Inversion4Alpha_2}
\end{gather}
We can combine the equations above with Equation \ref{Beta2} to get
\begin{gather}
    \beta_2(a,b)\;=\;\alpha_2(a,b)^b\,.\label{Beta2Final}
\end{gather}

\subsection{Classification}

With the above reductions complete, we can give necessary conditions for the pentagon equations to have solutions in this case.

Let $G\cong A\rtimes\bb Z/2\bb Z$ be a finite generalized dihedral group. Let $\mathbb C^\times_*$ denote the complex units as a $G$-module with the canonical action of $G$ that factors through $\bb Z/2\bb Z$, where $\bb Z/2\bb Z$ acts on $\bb C^\times$ by complex conjugation.

\begin{definition}
    A \emph{bicocycle} for $G$ is a function $\chi:G\times G\to \mathbb C^\times_*$ that satisfies the following properties
    \begin{gather*}
        \chi(a, bc)=\chi(a,b)\chi(a,c)^b\,,\hspace{5mm}\text{ and }\hspace{5mm}\chi(ab,c)=\chi(a,c)^b\chi(b,c)\,.
    \end{gather*}
    A bicocycle $\chi$ is said to be \emph{symmetric} with respect to $(-)^g\in\text{Gal}(\mathbb C/\mathbb R)$ if it satisfies the additional relation
    \begin{align*}
        \chi(a,b)&=\chi(b,a)^{gab}\,.
    \end{align*}
\end{definition}

\begin{theorem}\label{Thm:TYComplex}
        Let $\tau=\sfrac{\pm1}{\sqrt{2|G|}}$, let $(-)^g\in\text{Gal}(\mathbb C/\mathbb R)$, and let $\chi:G\times G\to \mathbb C^\times_*$ be a symmetric bicocycle on $G$ with respect to $(-)^g$, whose restriction $\chi\mid_{A\times A}$ is a nongedegerate bicharacter.
    
    
    A quadruple of such data gives rise to a non-split Tambara-Yamagami category $\s C_{\bb C}(G,g,\chi,\tau)$, with $\End(\1)\cong\bb R$ and $\End(m)\cong\bb C$.  Furthermore, all equivalence classes of such categories arise in this way. More explicitly, two categories \newline $\s C_{\bb C}(G,g,\chi,\tau)$ and $\s C_{\bb C}(G',g',\chi',\tau')$ are equivalent if and only if $g=g'$, and there exists 
    the following data:
    \begin{enumerate}[label = \roman*, align=CenterWithParen, labelwidth=1.5em]
        \item an isomorphism $f:G\to G'$,
        \item a map $(-)^h:\bb C\to\bb C$, either the identity or complex conjugation,
        \item a scalar $\lambda\in S^1\subset \mathbb C$,
    \end{enumerate}
    satisfying the following conditions for all $a,b\in G$
    \begin{gather}
        \chi'\Big(f(a),f(b)\Big)=\frac{\lambda\cdot\lambda^{ab}}{\lambda^a\cdot\lambda^b}\cdot\chi(a,b)^h\;,\label{EquivCond1}\\
        \frac{\tau'}{\tau}=\frac{\lambda}{\lambda^g}\label{EquivCond2}\,.
    \end{gather}
\end{theorem}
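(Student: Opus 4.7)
The plan is to mirror the four-part structure of the proof of Theorem \ref{Thm:TYQuaternionic}: establish (1) necessity of the classifying data, (2) sufficiency via construction of the category and verification of pentagons, (3) necessity of the equivalence conditions, and (4) sufficiency of those conditions via an explicit tensorator. The bulk of step (1) is already assembled in the preceding analysis of this section. Starting from an arbitrary non-split Tambara-Yamagami category of this type, the normalization procedure forces $\alpha \equiv \alpha_1 \equiv \alpha_3 \equiv \beta_1 \equiv \beta_3 \equiv 1$ and reduces $\beta_2$ to a function of $\alpha_2$ via Equation \ref{Beta2Final}. The residual data $(\alpha_2, \gamma)$ is constrained by the bicocycle identity (Equation \ref{Multiplicativity}), the symmetry relation (Equation \ref{Symmetry}), the explicit formula for $\gamma$ (Equation \ref{GammaFinal}), nondegeneracy of $\alpha_2|_{A \times A}$ (Equation \ref{NondegStep3}), the quantization $\tau = \pm 1/\sqrt{2|G|}$ (Equation \ref{OnlyTwoGammas}), and the generalized dihedral structure of $G$ (Lemma \ref{GeneralizedDihedral}). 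Setting $\chi := \alpha_2$ and $\tau := \gamma(1,1)_1$ then extracts the classifying data.

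For step (2), I would take a quadruple $(G, g, \tau, \chi)$ satisfying the hypotheses and write the associators in closed form with $\alpha_2 = \chi$, $\beta_2(a,b) = \chi(a,b)^b$, and $\gamma(a,b)_s = \overline{s}^{ga}\tau/\chi(a,b)^{gb}$, then verify each of Equations \ref{Complex1}--\ref{Complex16}. The trivialized coefficients collapse most equations; the nontrivial content of Equations \ref{Complex4}, \ref{Complex5}, and \ref{Complex8}--\ref{Complex11} reduces to the bicocycle and symmetry properties of $\chi$; Equations \ref{Complex12}--\ref{Complex15} reduce via Lemma \ref{ComplexReductionLemma} to the simpler forms \ref{12Reduced}--\ref{15Reduced} and follow by direct substitution; and the final pentagon \ref{Complex16} reduces after simplification to the scalar identity $2\tau^2 |G| = 1$, which is built into the hypothesis on $\tau$.

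For steps (3) and (4), given a monoidal equivalence $(F, J)$, the functor $F$ induces a group isomorphism $f: G \to G'$ on invertibles and necessarily sends $m$ to $m'$. A choice of nonzero $y \in \Hom(m', F(m))$ identifies $F|_{\End(m)}$ with one of the two Galois automorphisms $(-)^h$, and the tensorator coefficients $\theta, \varphi, \psi, \omega$ can be extracted as in the quaternionic proof. Imposing the normalization conventions recorded in Equations \ref{NormCoBTRules} and reducing the hexagon equations against the bicocycle and symmetry properties of $\chi$ yields $g = g'$ together with Equations \ref{EquivCond1} and \ref{EquivCond2}, where $\lambda := \omega(1) \in S^1$. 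Sufficiency then reverses this construction: given an admissible triple $(f, h, \lambda)$, the relations in \ref{NormCoBTRules} determine $\varphi, \psi, \theta$, and the hexagons are verified by direct calculation. I expect the main obstacle to be controlling the scalar $\lambda$; unlike the quaternionic case where the tensorator coefficients were forced to be real, here $\omega(1)$ is only constrained to lie on the unit circle, and the ratio $\lambda/\lambda^g$ is precisely what encodes the freedom to rescale $\tau$ while preserving monoidal equivalence, so the bookkeeping around the $g$-dependence is where the bulk of the calculation will go.
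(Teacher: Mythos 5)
Your proposal follows essentially the same four-part strategy as the paper's proof: necessity of the data from the preceding normalization analysis, existence via the same closed-form associators ($\alpha_2=\chi$, $\beta_2(a,b)=\chi(a,b)^b$, $\gamma(a,b)_s=\overline{s}^{ga}\tau/\chi(a,b)^{gb}$), and extraction/construction of a normalized equivalence with $\lambda=\omega(1)$, so it is essentially the paper's argument. Two small cautions: the verification of the last pentagon (Equation \ref{Complex16}) does not reduce only to the scalar identity $2\tau^2|G|=1$ but also needs the case analysis in which nondegeneracy of $\chi\mid_{A\times A}$ and the generalized dihedral splitting of $G$ force the relevant character sums to vanish, and the normalization of the equivalence is carried out in the paper by modifying $J$ with a monoidal natural isomorphism (Equations \ref{Isomorphism1}--\ref{Isomorphism4}) rather than by the change-of-basis constraints \ref{NormCoBTRules}.
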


\begin{proof}
    To prove the existence, we will construct the category $\s C=\s C_{\bb C}(G, g, \chi,\tau)$, and show that its associator satisfies the pentagon Equations \ref{Complex1}-\ref{Complex16}.
    
    For a given $a\in G$, let us denote the two projections $m\otimes m\to a$ as 
    $\pi_{a,s}=[a](\id_m\otimes \overline{s})$, and the two inclusion as $\iota_{a,t}=(\id_m\otimes t)[a]^\dagger$.
    The components of the associator for $\s C$ are defined by the following equations
    \begin{gather*}
	    \alpha_{a,b,c}=\id_{abc},\\
	    \alpha_{a,b,m}=\alpha_{m,a,b}\;=\;\id_m,\\
	    \alpha_{a,m,b}=\chi(a,b)^{ab}\cdot\id_m,\\
	    \alpha_{m,m,a}=\alpha_{a,m,m}\;=\;\id_{m\otimes m},\\
	    \alpha_{m,a,m}=\sum_{\substack{b\in G\\t\in\{1,i\}}}\left(\id_m\otimes \chi(a,b)^bt^b\right)\big(\iota_{b,1}\pi_{b,t}\big),\\
		\alpha_{m,m,m}=\sum_{\substack{a,b\in G\\s,t\in\{1,i\}}}(\id_m\otimes\iota_{a,t})\circ\left(\frac{\overline{s}^{gab}t^{b}\tau}{\chi(a,b)^g}\right)\circ(\pi_{b,s}\otimes\id_m)\,.
	\end{gather*}
    The left and right unitors $\ell_X$ and $r_X$ are identities for all simple objects $X$.
    
    The formulas above are designed so that the corresponding $\alpha$, $\alpha_i$'s, $\beta_j$'s and $\gamma$ of the category $\s C$ satisfy the following identities
    \begin{gather}
        \alpha\equiv\alpha_1\equiv\alpha_3\equiv\beta_1\equiv\beta_3\equiv 1\label{easycoeff}\,,\\
        \alpha_2(a,b)=\chi(a,b)=\beta_2(a,b)^b,\label{ab2IsChi}\\
        \gamma(a,b)_s=\frac{\overline{s}^{ga}\tau}{\chi(a,b)^{gb}}\,.\label{gammaHastheSga}
    \end{gather}
    We have already shown that the above relations are necessary for Equations \ref{Complex1}-\ref{Complex16} to be valid, and so we proceed to show that they are sufficient as well.
    
    Equations \ref{Complex1}, \ref{Complex2}, \ref{Complex3}, \ref{Complex6}, and \ref{Complex7} follow immediately from Equations \ref{easycoeff}.  Equations \ref{Complex4}, \ref{Complex5}, and \ref{Complex8} - \ref{Complex11} follow from the fact that $\chi$ is a symmetric bicocycle.
    
    Equations \ref{Complex12} - \ref{Complex14} are all similar to one another.  Since the functions $\alpha, \alpha_1, \alpha_3, \beta_1,$ and $\beta_3$ are trivial by Equations \ref{easycoeff}, and since $\gamma(a,b)_s$ has the factor $\overline{s}^{ga}$ as in Equation \ref{gammaHastheSga}, we may equivalently verify Equations \ref{12Reduced} - \ref{15Reduced} instead.  These reduced equations are immediate from Equations \ref{ab2IsChi} and \ref{gammaHastheSga}.
    
    Finally it is necessary to prove Equation \ref{Complex16}.  We begin by reducing the right-hand side.
    \begin{align}
        &\sum_{c,s,t}t\otimes s^{gbd}\cdot\beta_2(c,b)\cdot\gamma(c,d)^{gbd}_t\cdot\gamma(a,c)_s\nonumber\\
        &=\tau^2\sum_{c,s,t}t\otimes s^{gbd}\overline{s}^{ga}\overline{t}^{cbd}\cdot\frac{1}{\chi(c,b^{-1})\chi(c,d)^{b}\chi(c,a)^{a}}\nonumber\\
        &=\tau^2\sum_{c,t}t\otimes \sum_{s}\left(s^{gbd}\overline{s}^{ga}\right)\overline{t}^{cbd}\cdot\frac{1}{\chi(c,b^{-1}d)\chi(c,a)^{a}}\nonumber\\
        &=\tau^2\left(1+(-1)^{|abd|}\right)\sum_{c,t}t\otimes \overline{t}^{cbd}\cdot\frac{1}{\chi(c,ab^{-1}d)^a}\,.\label{MidwayProvingComplex16}
    \end{align}
    If $|abd|=1$, the entire expression is zero, and this matches the left-hand side of Equation \ref{Complex16}, so there is nothing to prove. When $d=ba^{-1}$, Equation \ref{MidwayProvingComplex16} becomes
    \begin{gather*}
        2\tau^2\sum_{c,t}t\otimes \overline{t}^{ac}
        \;=\;2\tau^2\sum_{c}\left(1\otimes1\;+\;i\otimes\overline{i}^{ac}\right)
        \;=\;2\tau^2|G|\left(1\otimes1\right)
        \;=\;1\otimes 1\,.
    \end{gather*}
    This also agrees with the left-hand side of Equation \ref{Complex16}.  Thus the only case left to analyze is the case when both $|abd|=0$ and $d\neq ba^{-1}$.  Let us set $f=ab^{-1}d\in A$ in Equation \ref{MidwayProvingComplex16} to continue the computation in this final case
    \begin{align}
        (\ref{MidwayProvingComplex16})\hspace{5mm}&=\tau^2\left(2\right)\sum_{c,t}t\otimes \overline{t}^{ca}\cdot\frac{1}{\chi(c,f)^a}\nonumber\\
        &=2\tau^2\sum_{c}2P_{1,ca}\left(1\otimes\frac{1}{\chi(c,f)^a}\right)\nonumber\\
        &=2\tau^2\sum_{c}2P_{1,ca}\left(\frac{1}{\chi(c,f)^c}\otimes1\right)\nonumber\\
        &=2\tau^2\sum_{c}2P_{1,ca}\big(\chi(c^{-1},f)\otimes1\big)\nonumber\\
        &=2\tau^2\sum_{c}2P_{1,ca}\big(\chi(c,f)\otimes1\big)\,.\label{AlmostDoneComeplex16}
    \end{align}
    Since $G$ is generalized dihedral, we can write every $c\in G$ as $c=uw^\epsilon$, where $u\in A$, $|w|=1$ and $\epsilon\in\{0,1\}$.  Using this description, Expression \ref{AlmostDoneComeplex16} becomes
    \begin{align*}
        &2\tau^2\left(2P_{1,a}\sum_{u\in A}\chi(u,f)\otimes1\;+\;2P_{1,wa}\sum_{u\in A}\chi(uw,f)\otimes1\right)\\
        &=2\tau^2\left(2P_{1,wa}\sum_{u\in A}\chi(uw,f)\otimes1\right)\\
        &=2\tau^2\left(2P_{1,wa}\sum_{u\in A}\overline{\chi(u,f)}\chi(w,f)\otimes1\right)\;=\;0.
    \end{align*}
    Here we have used nondegeneracy of $\chi\mid_{A\times A}$ to eliminate the two summations.  This again matches the left-hand side of Equation \ref{Complex16}, so this equation is satisfied in all cases.  This completes the proof of the pentagon equations, and thus establishes the existence of the monoidal categories $\s C_{\bb C}(G,g,\chi,\tau)$.
    
    \vspace{5mm}
    
    Now suppose there is an $\bb R$-linear monoidal equivalence $$(F,J):\s C:=\s C_{\bb C}(G,g,\chi,\tau)\to\s C_{\bb C}(G',g',\chi',\tau')=:\s C'\,.$$  Since $F$ is an equivalence, it must send $m\in\s C$ to $m'\in\s C'$.  Since $(F,J)$ is monoidal, it must restrict to a group isomorphism $f:G\to G'$.  Because of this, we may assume that $G=G'$, and that $f:G\to G$ is an automorphism.
    
    Since $F$ is $\bb R$-linear, it must induce an $\bb R$-linear isomorphism $\bb C\cong\End(m)\to\End(m')\cong\bb C$.  Since the Galois group of $\mathbb C$ over $\mathbb R$ consists of the identity and complex conjugation, let us denote $\lambda^h:=F(\lambda)\in\bb C\cong\End(m')$.  Since monoidal functors preserve duals, we get $g=g'$. In particular, the property of $m$ being \emph{directly self-dual}, that is, $g=\id$, or \emph{conjugately self-dual}, that is, $g=\overline{(\;)}$, is an invariant of the monoidal equivalence class of $\s C_{\bb C}(G,g,\chi,\tau)$.
    
    The monoidal structure map $J$ is required to satisfy a hexagon relation.  There is one hexagon relation for every sequence of three simple objects.  Since the simple objects can either be invertible or $m$, there are $2^3$ equations that must be satisfied.  These requirements are analogous to Equations \ref{Basis1} - \ref{Basis7} together with one additional equation relating to $\gamma$.  In our current context, these $8$ equations are as follows
    \begin{align}
        1&=\delta\theta\,,\label{Equivalence1}\\
        \theta&=\delta^L(\psi)\,,\label{Equivalence2}\\
        \chi'\Big(f(a),f(b)\Big)&=\frac{\psi(b)\varphi(a)^b}{\psi(b)^a\varphi(a)}\cdot\chi(a,b)^h\label{Equivalence3}\,,\\
        \theta&=\delta^R(\varphi)\label{Equivalence4}\,,\\
        \varphi(a)^{gab}\omega(b)&=\omega(a^{-1}b)\theta(a,a^{-1}b)\label{Equivalence5}\,,\\
        \chi'\Big(f(a),f(b)\Big)^b&=\frac{\omega(b)^a\varphi(a)}{\omega(b)\psi(a)^{gab}}\cdot\chi(a,b)^{hb}\label{Equivalence6}\,,\\
        \theta(ba^{-1},a)\omega(ba^{-1})&=\psi(a)\omega(b)^a\label{Equivalence7}\,,\\
        \sum_t\frac{t}{\psi(a)^{gb}}\otimes\frac{\gamma'\Big(f(a),f(b)\Big)_{t}}{\omega(a)}&=\sum_s\frac{s^h}{\omega(b)}\otimes\frac{\gamma(a,b)^h_s}{\varphi(b)}\label{Equivalence8}\,.
    \end{align}
    By using Equation \ref{gammaHastheSga}, we can once again apply Lemma \ref{Lem:ComplexReductionLemma} to reduce Equation \ref{Equivalence8} to the following
    \begin{align}
        \frac{\tau'}{\psi(a)^{ab}\omega(a)\chi'\Big(f(a),f(b)\Big)^{gb}}&=\frac{\tau}{\omega(b)^{ga}\varphi(b)\chi(a,b)^{ghb}}\label{Equivalence8Reduced}\,.
    \end{align}

By setting $a=1$ in Equations \ref{Equivalence5}, \ref{Equivalence6}, and \ref{Equivalence7}, we get 
\begin{align}
    \varphi(1)&=\theta(1,b),\label{RealphipsiObs1}\\
    \psi(1)&=\theta(b,1),\;\;
    \label{RealphipsiObs2}\\
    \varphi(1)&=\psi(1)\;.\label{RealphipsiObs3}
\end{align}

In exactly the same way we were able to normalize Equations \ref{Complex1} - \ref{Complex16} using a change of basis, we can simplify Equations \ref{Equivalence1} - \ref{Equivalence8} by using a monoidal natural isomorphism $\mu:(F,J)\to(F,J')$.
By monoidality, the components of $\mu$ must satisfy the following equations
\begin{align}
    \theta'&=\theta\cdot\delta(\mu_{-})\,,\label{Isomorphism1}\\
    \varphi'(a)&=\frac{\mu_m\mu_a}{\mu_m^a}\cdot\varphi(a)\,,\label{Isomorphism2}\\
    \psi'(a)&=\frac{\mu_m\mu_a}{\mu_m^a}\cdot\psi(a)\,,\label{Isomorphism3}\\
    \omega'(a)&=\frac{\mu_m^{ga}\mu_m}{\mu_a}\cdot\omega(a)\,,\label{Isomorphism4}
\end{align}
where $(\theta',\psi',\varphi',\omega')$ are the coefficients of our new tensorator $J'$.

Equations \ref{RealphipsiObs1} - \ref{RealphipsiObs3} imply that $\psi(1)=\varphi(1)$ is real. By choosing $\mu$ such that $\mu_1=\varphi(1)^{-1}$, and all other $\mu_X$ are trivial, we can assume without loss of generality that $\psi(1)=\varphi(1)=\theta(b,1)=\theta(1,b)=1$.

By taking the norm of both sides of Equation \ref{Equivalence2}, we find that $|\theta|=\delta|\psi|$.  If we set $\mu_a:=|\psi(a)|^{-1}$, this allows us to assume that $|\theta|=1$, so $\theta(a,b)\in\{\pm1\}$.

Now we proceed by setting $b=a$ in Equations \ref{Equivalence5} and \ref{Equivalence7} and we get
\begin{align}
    \varphi(a)&=\frac{\omega(1)^g}{\omega(a)^g}\label{VarpInTermsOfOmega}\,,\\
    \psi(a)&=\frac{\omega(1)}{\omega(a)^a}\label{PsiInTermsOfOmega}\,.
\end{align}

Equation \ref{VarpInTermsOfOmega} allows us to express $\omega$ in terms of $\varphi$. Using this, we expand Equation \ref{Equivalence6} for arbitrary $a$ and $b$ to find that

\begin{gather}
    \frac{\varphi(a)}{\varphi(a)^b}\;=\;\frac{\varphi(b)}{\varphi(b)^a}\;.\label{VarpIsTrivialStep1}
\end{gather}

Equation \ref{VarpIsTrivialStep1} implies that there is some $\lambda\in \mathbb C^\times_*$ such that $\varphi^2=\delta\lambda^2$.  By taking square roots, we find that $\varphi(a)=\pm(\delta\lambda)(a)$ with the sign possibly depending on $a$.  We have used the magnitude of $\mu_a$, but we are still free to use the sign of $\mu_a$ in Equation \ref{Isomorphism2} to ensure that
\begin{gather}
    \varphi(a)=\frac{\lambda}{\lambda^a}\,,\label{VarpIsTrivialStep2}
\end{gather}
which eliminates the sign ambiguity.  Next we can use $\mu_m=\lambda^{-1}$ in Equation \ref{Isomorphism2} to assume that $\varphi\equiv1$.

In light of $\varphi$ being trivial, Equation \ref{VarpInTermsOfOmega} implies that $\omega(a)=\omega(1)$, so $\omega$ is constant.  This combines with Equation \ref{PsiInTermsOfOmega} to yield a nice formula for $\psi$ in terms of the constant $\omega(1)$.  In summary, after normalization we arrive at the following formulas
\begin{align}
    \varphi(a)&=1\,,\label{VarpIsTrivialStep3}\\
    \omega(a)&=\omega(1)\,,\label{OmegaIsConstant}\\
    \psi(a)&=\frac{\omega(1)}{\omega(1)^a}\label{PsiIsACoboundary}\,.
\end{align}

These new formulas can be combined with Equation \ref{Equivalence3} to produce
\begin{gather}
    \chi'\Big(f(a),f(b)\Big)=\frac{\omega(1)\cdot\omega(1)^{ab}}{\omega(1)^a\cdot\omega(1)^b}\cdot\chi(a,b)^h\;,\label{EquivCond1Rediscovered}
\end{gather}

They can also be combined with Equations \ref{Equivalence3} and \ref{Equivalence6} to imply that
\begin{gather}
    \omega(1)^2\;=\;\big(\omega(1)^2\big)^g\,.\label{Omega1SquaredIsgInvariant}
\end{gather}
Finally Equations \ref{VarpIsTrivialStep3}, \ref{OmegaIsConstant}, \ref{PsiIsACoboundary}, and \ref{Omega1SquaredIsgInvariant} can be used to reduce Equation \ref{Equivalence8Reduced} to derive
\begin{gather}
    \frac{\tau'}{\tau}\;=\;\frac{\omega(1)}{\omega(1)^g}\;.\label{EquivCond2Rediscovered}
\end{gather}

By setting $\omega(1)=\lambda$, the reader will recognize Equations \ref{EquivCond1Rediscovered} and \ref{EquivCond2Rediscovered} as Conditions \ref{EquivCond1} and \ref{EquivCond2} respectively from the statement of the theorem.  Thus we have shown that a generic equivalence forces $g=g'$, gives rise to the data $(f,h,\lambda)$ stated in the theorem, and makes Conditions \ref{EquivCond1} and \ref{EquivCond2} necessary.

We now turn to the question of sufficiency.  Suppose that $G=G'$, $g=g'$, and that the data $(f,h,\lambda)$ are given.  Then the pair $(f,h)$ uniquely determines the underlying functor $F:\mathcal C_{\mathbb C}(G,g,\chi,\tau)\to\mathcal C_{\mathbb C}(G,g,\chi',\tau')$.  Define the functions
\begin{gather*}
    J_{a,b}=\id_{f(a)\otimes f(b)}\;,\hspace{4mm}
    J_{a,m}=\id_{f(a)\otimes m}\;,\hspace{4mm}
    J_{m,a}=\left(\frac{\lambda}{\lambda^a}\right)\otimes\id_{f(a)}\;,\hspace{4mm}
    J_{m,m}=\id_{m}\otimes\lambda\;.
\end{gather*}
These are the components of a monoidal structure map, and are clearly isomorphisms since $\lambda\neq0$.  This monoidal structure map $J$ can be described in terms of complex-valued coordinate functions $(\theta,\varphi,\psi,\omega)$ as follows
\begin{gather*}
    \theta(a,b)=1\;,\hspace{6mm}
    \varphi(a)=1\;,\hspace{6mm}
    \psi(a)=\frac{\lambda}{\lambda^a}\;,\hspace{6mm}
    \omega(a)=\lambda\;.
\end{gather*}
The fact that these coefficient functions satisfy Equations \ref{Equivalence1} through \ref{Equivalence8} is easy to check.  We comment that Condition \ref{EquivCond1} is used to prove Equation \ref{Equivalence3}, Condition \ref{EquivCond2} is used to prove Equations \ref{Equivalence5} and \ref{Equivalence7}, and both Conditions \ref{EquivCond1} and \ref{EquivCond2} are necessary to prove Equations \ref{Equivalence6} and \ref{Equivalence8}.  Thus the pair $(F,J)$ is a monoidal equivalence, and the theorem is proven.

\end{proof}

\begin{example}
    The simplest dihedral group is the group $G=D_{2\cdot1}\cong\mathbb Z/2\mathbb Z$.  This corresponds to the case where $A$ is the trivial group.  Let us denote the nontrivial element of $G$ by $w$.  By using Theorem \ref{Thm:TYComplex}, we find that there are exactly four categories with complex $m$ and group $G$.  Upon base extension to $\mathbb C$, these categories become pointed, with fusion rules corresponding to $\mathbb Z/4\mathbb Z$ or $(\mathbb Z/2\mathbb Z)^2$.
    
    When $g$ conjugates, the symmetry of $\chi$ implies that $\chi(w,w)=\pm1$.  Condition \ref{EquivCond2} implies that it is possible for $\tau'=-\tau$ by setting $\lambda=\pm i$.  This shows that there is an equivalence $\mathcal C(G,g,\chi,\tau)\simeq\mathcal C(G,g,\chi,-\tau)$.  Thus the only relevant invariant of these categories is the number $\chi(w,w)$.  The case where $\chi(w,w)=1$ becomes $\mathbb C\text{-}\Vec_{\mathbb Z/4\mathbb Z}$ upon extension to $\mathbb C$.  The case where $\chi(w,w)=-1$ becomes $\mathbb C\text{-}\Vec_{\mathbb Z/4\mathbb Z}^{\upsilon^2}$, where $\upsilon^2(a^i,a^j,a^k)=(-1)^{i\cdot\lfloor\frac{j+k}{4}\rfloor}$ represents the unique cohomology class of order two in $H^3\big(\mathbb Z/4\mathbb Z;\mathbb C^\times\big)$.  Using the descent theory of \cite{etingofDescentAndForms}, it can be verified that these are the only two cohomology classes in  for which the corresponding pointed category admits a real form.
    
    When $g$ doesn't conjugate, Condition \ref{EquivCond2} implies that $\tau'=\tau$.  Theorem \ref{Thm:TYComplex} implies that the scalar $\chi(w,w)$ can always be normalized to be $1$ by choosing any $\lambda$ such that $\lambda^4=\chi(w,w)^{-1}$.  Thus the only relevant invariant of these categories is $\tau=\pm\tfrac12$.  The case where $\tau=\tfrac12$ becomes $\mathbb C\text{-}\Vec_{(\mathbb Z/2\mathbb Z)^2}$ upon extension to $\mathbb C$, while the case where $\tau=-\tfrac12$ becomes $\mathbb C\text{-}\Vec_{(\mathbb Z/2\mathbb Z)^2}^{\xi}$ upon extension to $\mathbb C$, where $\xi(a^ib^j,a^kb^\ell,a^mb^n)=(-1)^{ikm+j\ell n}$ is a representative cocycle in $H^3((\mathbb Z/2\mathbb Z)^2;\mathbb C^\times\big)$ corresponding to the associator.  Descent theory again verifies that these are the only two associators that allow the category to have a real form.
\end{example}

\begin{example}
    When $A=\mathbb Z/n\mathbb Z$, the group is $G=D_{2\cdot n}$, the dihedral group of order $2n$.  If $a\in A$ is a generator, then $\chi(a,a)$ must be a primitive $n^\text{th}$ root of unity.  The symmetry condition then implies that $g$ is allowed to conjugate only in the case where $A=\mathbb Z/2\mathbb Z$, and for all other cases $g$ must not conjugate.
    
    Note that this restriction is only for classical dihedral groups.  For generalized dihedral groups where $A$ is non-cyclic, there are typically more options.
\end{example}

\begin{proposition}\label{Prop:ComplexCatsAreRigid}
    The categories $\mathcal C_{\mathbb C}(G,g,\chi,\tau)$ are rigid.
\end{proposition}

\begin{proof}
    Similarly to Proposition \ref{Prop:QuatCatsAreRigid}, we may take $\text{ev}_m=[1]$ and $\coev_{m}=\tau^{-1}[1]^\dagger$, and the duality equations follow from a direct computation.  As before, the only missing ingredient is the following formula for the inverse of the associator
    \[\alpha_{m,m,m}^{-1}\;=\;\sum_{\substack{a,b\in G\\s,t\in\{1,i\}}}(\iota_{a,t}\otimes1)\circ\left(\overline{s}^{a}t^{gab}\tau\chi(b,a)^g\right)\circ(1\otimes\pi_{b,s})\,.\]
    
\end{proof}

\section{Analysis of the Complex Galois Case\label{Sec:ComplexGalois}}



In this section, we will construct the non-split Tambara-Yamagami categories \newline $\mathcal C_{\overline{\mathbb C}}(A,\chi)$, where all simple objects are complex, and $m$ is the unique Galois nontrivial simple object.  The analysis follows the same pattern as before, but now we keep track of each time the Galois action of $m$ is applied.  The pentagon equations are as follows

\begin{align}
\delta\alpha&=1\,,\label{FullComplex1}\\
\delta\alpha_3&=\alpha^{-1}\,,\label{FullComplex2}\\
\delta\alpha_1&=\overline{\alpha}\,,\label{FullComplex3}\\
\alpha_2(a,bc)&=\alpha_2(a,c)\alpha_2(a,b)\,,\label{FullComplex4}\\
\alpha_2(ab,c)&=\alpha_2(b,c)\alpha_2(a,c)\,,\label{FullComplex5}\\
\overline{\alpha(a,b,b^{-1}a^{-1}c)}\beta_1(ab,c)&=\beta_1(b,a^{-1}c)\beta_1(a,c)\overline{\alpha_3(a,b)}\,,\label{FullComplex6}\\
\beta_3(ab,c)\overline{\alpha(cb^{-1}a^{-1},a,b)}&=\alpha_1(a,b)\beta_3(b,c)\beta_3(a,cb^{-1})\,,\label{FullComplex7}\\
\beta_2(b,c)&=\beta_2(b,a^{-1}c)\overline{\alpha_2(a,b)}\,,\label{FullComplex8}
\end{align}

\begin{align}  
\beta_2(a,c)&=\alpha_2(a,b)\beta_2(a,cb^{-1})\,,\label{FullComplex9}\\
\beta_1(a,c)\beta_3(b,c)&=\beta_3(b,a^{-1}c)\overline{\alpha(a,a^{-1}cb^{-1},b)}\beta_1(a,cb^{-1})\,,\label{FullComplex10}\\
\beta_2(a,c)\beta_2(b,c)&=\alpha_3(a,b)\beta_2(ab,c)\overline{\alpha_1(a,b)}\,,\label{FullComplex11}\\
\alpha_2(a,c)\gamma(c,b) &= \overline{\beta_1(a,b)}\alpha_3(a,a^{-1}b)\gamma(c,a^{-1}b)\,,\label{FullComplex12}\\
\alpha_2(b,a)\gamma(c,b) &= \beta_3(a,c)\alpha_1(ca^{-1},a)\gamma(ca^{-1},b)\,,\label{FullComplex13}\\
\alpha_1(a,c)\gamma(c,b) &= \overline{\beta_2(a,b)}\beta_1(a,ac)\,,\gamma(ac,b)\,,\label{FullComplex14}\\
\alpha_3(b,a)\gamma(c,b) &= \beta_2(a,c)\overline{\beta_3(a,ba)}\gamma(c,ba)\,,\label{FullComplex15}\\
\delta_{d,ba^{-1}}\beta_3(a,b)\beta_1(ba^{-1},b)&=\sum_{c}\beta_2(c,b)\overline{\gamma(c,d)}\gamma(a,c)\,.\label{FullComplex16}
\end{align}

An equivalence $(F,J):\mathcal C\to\mathcal C'$, amounts to having an isomorphism $f:A\to A'$, an automorphism $(-)^h\in\text{Gal}(\bb C/\bb R)$, and a collection $\{\theta,\varphi,\psi,\omega\}$ of complex valued functions that act as the coefficients of the tensorator $J_{X,Y}:F(X)\otimes F(Y)\to F(X\otimes Y)$.
\begin{align}
    f^*\alpha'&=\alpha^h\cdot\delta\theta\,,\label{FullCBasis1}\\
    \alpha_1'\big(f(a),f(b)\big)&=\alpha_1(a,b)^h\cdot\frac{\psi(ab)\overline{\theta}(a,b)}{\psi(a)\psi(b)}\,,\label{FullCBasis2}\\
    \alpha_2'\big(f(a),f(b)\big)&=\alpha_2(a,b)^h\,,\label{FullCBasis3}\\
    \alpha_3'\big(f(a),f(b)\big)&=\alpha_3(a,b)^h\cdot\frac{\varphi(b)\varphi(a)}{\varphi(ab)\theta(a,b)}\,,\label{FullCBasis4}\\  
    \beta_1'\big(f(a),f(b)\big)&=\beta_1(a,b)^h\cdot\frac{\omega(a^{-1}b)\theta(a,a^{-1}b)}{\varphi(a)\omega(b)}\,,\label{FullCBasis5}\\
    \beta_2'\big(f(a),f(b)\big)&=\beta_2(a,b)^h\cdot\frac{\overline{\varphi}(a)}{\psi(a)}\,,\label{FullCBasis6}\\
    \beta_3'\big(f(a),f(b)\big)&=\beta_3(a,b)^h\cdot\frac{\overline{\psi}(a)\omega(b)}{\theta(ba^{-1},a)\omega(ba^{-1})}\,,\label{FullCBasis7}\\
    \gamma'\big(f(a),f(b)\big)&=\gamma(a,b)^h\cdot\frac{\psi(a)\overline{\omega}(a)}{\varphi(b)\omega(b)}\,.\label{FullCBasis8}
\end{align}

The sequence of deductions that follow is only a superficial modification of the original argument of \cite{TAMBARA1998692}, but we include it here for completeness.  Begin by using the identity functor for $F$, so that $f$ and $h$ are trivial, and Equations \ref{FullCBasis1}-\ref{FullCBasis8} reduce to change of basis formulas.

By setting $\theta=\alpha_3$ and $\varphi\equiv1$ in Equation \ref{FullCBasis4}, we may assume that $\alpha\equiv1$ and $\alpha_3\equiv1$.  Setting $\overline{\psi}(a)=\beta_2(a,1)$ in Equation \ref{FullCBasis6} allows us to assume that $\beta_2(a,1)\equiv1$.

Next, we substitute $b=1$ into Equation \ref{FullCBasis5} to find that
\[\beta_1'(a,1)=\frac{\omega(a^{-1})\theta(a,a^{-1})}{\omega(1)}\cdot\beta_1(a,1)\,.\]
By rearranging this formula, we find that if we set
\[\omega(a^{-1}):=\frac{\omega(1)}{\theta(a,a^{-1})\beta_1(a,1)}\,,\]
this allows us to assume that $\beta_1(a,1)\equiv1$.  Upon substituting $c=1$ into Equation \ref{FullComplex6}, this new normalization shows that $\beta_1\equiv1$.

Setting $a=c$ in Equation \ref{FullComplex8}, and $b=c$ in Equation \ref{FullComplex9} imply that
\[\alpha_2(b,a)\;=\;\beta_2(b,a)\;=\;\overline{\alpha_2(a,b)}\,.\]
By Equations \ref{FullComplex4} and \ref{FullComplex5}, $\beta_2$ is a bicharacter, and thus Equation \ref{FullComplex11} implies that $\alpha_1\equiv1$.

Equation \ref{FullComplex10} implies that $\beta_3(a,b)=\beta_3(a,1)$, and by setting $a=b$ in Equation \ref{FullComplex12}, we get that $\gamma(a,b)=\alpha_2(a,b)\gamma(a,1)$.
We can combine these with Equations \ref{FullComplex13} and \ref{FullComplex14} to obtain
\[\gamma(c,1)\;=\;\beta_3(a,1)\gamma(ca^{-1},1)\,, \text{ and } \gamma(c,1)\;=\;\gamma(ac,1)\,.\]
This shows that $\beta_3\equiv1$ and $\gamma(a,b)\;=\;\alpha_2(a,b)\gamma(1,1)$.

With all these observations in place, we reduce Equation \ref{FullComplex16} to produce
\[\delta_{d,ba^{-1}}\;=\;\gamma(1,1)^2\sum_{c\in A}\alpha_2\big(c,ba^{-1}d^{-1}\big)\,,\]
which is equivalent to $\alpha_2$ being nondegenerate, and $\gamma(1,1)^2|A|=1$.


\begin{theorem}\label{Thm:TYFullComplex}
    Let $A$ be a finite group, and let $\chi:A\times A\to \mathbb C^\times$ be a nondegenerate skew-symmetric bicharacter.  Such a pair $(A,\chi)$ gives rise to a non-split Tambara-Yamagami category $\s C_{\overline{\bb C}}(A,\chi)$, with $\End(X)\cong\bb C$ for every simple object $X$.  Furthermore, all equivalence classes of such categories arise in this way.  Two categories $\s C_{\overline{\bb C}}(A,\chi)$ and $\s C_{\overline{\bb C}}(A',\chi')$ are equivalent if and only there exist isomorphisms:
    \begin{enumerate}[label = \roman*, align=CenterWithParen, labelwidth=1.5em]
        \item an isomorphism $f:A\to A'$, and
        \item $(-)^h:\bb C\to\bb C$ (either the identity or complex conjugation),
    \end{enumerate}
    such that $\chi'\big(f(a),f(b)\big)=\chi(a,b)^h$ for all $a,b\in A$.
\end{theorem}

\begin{proof}
    We begin by letting $\tau=\pm\tfrac{1}{\sqrt{A}}$, then defining an auxiliary category $\mathcal C_{\overline{\mathbb C}}(A,\chi,\tau)$ by giving it the desired fusion rules, asserting that all simple objects have $\End(X)\cong\mathbb C$, and requiring $m$ to be Galois nontrivial.  We define the associators by the following equations
    \begin{gather*}
        \alpha_{a,b,c}=\id_{abc}\,,\\
	    \alpha_{a,b,m}=\alpha_{m,b,c}=\id_{m}\,,\\
	    \alpha_{a,m,c}=\chi(a,c)\cdot\id_{m},\\
	    \alpha_{a,m,m}=\alpha_{m,m,c}=\id_{m\otimes m}\,,\\
	    \alpha_{m,b,m}=\bigoplus_{a\in A}\chi(a,b)^{-1}\cdot\id_{a}\,,\\
	    \alpha_{m,m,m}=\tau\cdot\sum_{a,b\in A}\chi(a,b)\cdot(\id_m\otimes[a]^\dagger)([b]\otimes\id_m)\,.
    \end{gather*}

    The reduction immediately preceding the proof establishes that this data is necessary to determine such a category.  Sufficiency then follows in a manner similar to the previous theorems.  The main subtlety lies in the equivalence classification, and the fact that the theorem makes no reference to $\tau$.

    Suppose there exists a monoidal equivalence $(F,J):\mathcal C_{\overline{\mathbb C}}(A,\chi,\tau)\to\mathcal C_{\overline{\mathbb C}}(A',\chi',\tau')$.  As we have seen before, $f:A\to A'$ will be an isomorphism, and $(-)^h\in\text{Gal}(\mathbb C/\mathbb R)$.  With our coefficients reduced as they are, Equations \ref{FullCBasis1}-\ref{FullCBasis8} simplify to the following.

    \begin{align}
        \chi'\big(f(a),f(b)\big)&=\chi(a,b)^h\,,\label{FullCBasis'3}\\
        \theta&=\delta\varphi\,,\label{FullCBasis'4}\\
        1&=\frac{\omega(a^{-1}b)\theta(a,a^{-1}b)}{\varphi(a)\omega(b)}\,,\label{FullCBasis'5}\\
        {\psi(a)}&={\overline{\varphi}(a)}\,,\label{FullCBasis'6}\\
        1&=\frac{\overline{\psi}(a)\omega(b)}{\theta(ba^{-1},a)\omega(ba^{-1})}\,,\label{FullCBasis'7}\\
        \tau'&=\tau\cdot\frac{\psi(a)\overline{\omega}(a)}{\varphi(b)\omega(b)}\,.\label{FullCBasis'8}
    \end{align}

    Since we are only checking the existence of an equivalence, we are free to normalize our equivalences by a monoidal isomorphism.  If $\mu:(F,J)\Rightarrow(F',J')$ is a monoidal isomorphism, then its components satisfy the following relations.

    \begin{align}
        \mu_{ab}\cdot\theta'(a,b)&=\theta(a,b)\cdot\mu_a\cdot\mu_b\label{MonEqFullC1}\\
        \varphi'(a)&=\varphi(a)\cdot\mu_a\label{MonEqFullC2}\\
        \psi'(a)&=\psi(a)\cdot\overline{\mu_a}\label{MonEqFullC3}\\
        \mu_{a}\cdot\omega'(a)&=\omega(a)\cdot\mu_m\cdot\overline{\mu_m}\label{MonEqFullC4}
    \end{align}
    By setting $\mu_a=\varphi(a)^{-1}$, we can completely trivialize $\varphi$, and hence also $\psi$ and $\theta$.  Having done this, Equations \ref{FullCBasis'4} and \ref{FullCBasis'7} show that $\omega$ is constant, so let us set that constant value to be $\omega(1)=\lambda$.  Equation \ref{MonEqFullC4} shows that we can use $\mu_m$ to assume that $|\lambda|=1$.

    The relations set out in the hypotheses of the theorem provide Equation \ref{FullCBasis'3}, so the only remaining nontrivial equation is Equation \ref{FullCBasis'8}, which becomes
    \begin{gather}
        \tau'\cdot\lambda^2\;=\;\tau\,.
    \end{gather}
    Since $\tau$ and $\tau'$ can only differ by a sign, we find that $\lambda=i^n$ for some $n$.
    
    Since the choice of $\lambda$ does not have any effect on the validity of Equations \ref{FullCBasis'3}-\ref{FullCBasis'7}, $\lambda=1$ allows for an equivalence when $\tau=\tau'$, and $\lambda=i$ allows for an equivalence when $\tau=-\tau'$.  It follows that the sign of $\tau$ does not control the existence of an equivalence at all.
    
    Thus the sign of $\tau$ is not an invariant of the category $\mathcal C_{\overline{\mathbb C}}(A,\chi,\tau)$ at all.  Knowing this, we can define $\mathcal C_{\overline{\mathbb C}}(A,\chi):=\mathcal C_{\overline{\mathbb C}}(A,\chi,\tfrac{1}{\sqrt{|A|}})$ to complete the theorem.
    
\end{proof}

\begin{remark}
The reader may recognize this as a skew-symmetric analogue of the classical Tambara-Yamagami classification.  Nondegenerate skew-symmetric bicharacters on finite abelian groups were classified in \cite{wallQuadraticForms}.  The classification involves the familiar `hyperbolic' bicharacters on the $p$-primary summands, with extra possibilities for the case when $p=2$.
\end{remark}

\begin{example}
    Let $A=\mathbb Z/2\mathbb Z=\langle t\rangle$.  The bicharacter $\chi(t,t)=-1$ is nondegenerate, and simultaneously symmetric and skew-symmetric.  Thus, in addition to giving rise to classical split Tambara-Yamagami categories, it can also be used to produce $\mathcal C_{\overline{\mathbb C}}(A,\chi)$.  The only difference between the two constructions is the Galois nontriviality of $m$.
\end{example}

\begin{example}
    Let $A=(\mathbb Z/4\mathbb Z)^2=\langle x,y\rangle$.  Define a skew-symmetric bicharacter by the formulas
    \[\chi(x,x)=1\,,\qquad\chi(x,y)=i\,,\qquad \chi(y,y)=-1\,.\]
    Since $\chi$ is nondegenerate, 
    we can construct $\mathcal C_{\overline{\mathbb C}}(A,\chi)$. This type of bicharacter has no nondegenerate analogues for odd $p$-primary groups.
\end{example}

\begin{example}\label{Eg:FullCG=1}
    Let $A=1$ be the trivial group.  There is only one bicharacter, and it is automatically nondegenerate.  In this case, the category $\mathcal C_{\overline{\mathbb C}}(A,\chi)$ is equivalent to the category $(\mathbb C,\mathbb C)$-bim from Example \ref{Eg:CCBim}.

    This category is pointed, and thus the equivalence classes of monoidal structures that it can have are classified by the twisted cohomology group $H^3\big(\text{Gal}(\mathbb C/\mathbb R)\,;\,\mathbb C^\times\big)=1$.  The fact that this group is trivial can be interpreted as another proof that the sign of $\tau$ is irrelevant, at least when $A$ is trivial.
\end{example}

\begin{proposition}\label{Prop:ComplexGaloisCatsAreRigid}
    The categories $\mathcal C_{\overline{\mathbb C}}(A,\chi)$ are rigid.
\end{proposition}

\begin{proof}
    As in the previous cases, we set $\text{ev}_m:=[1]$ and $\text{coev}_{m}:=\sqrt{|A|}\,[1]^\dagger$.  Upon observing that $\alpha_{m,m,m}^{-1}=\alpha_{m,m,m}$, the duality equations hold by a direct computation.
\end{proof}

\printbibliography
\end{document}